\newcommand{\lebesgue}{\lambda}
\newcommand{\Prob}{\mathds{P}}
\newcommand{\E}{\mathds{E}}
\newcommand{\C}{\mathbb{C}}
\newcommand{\R}{\mathbb{R}}
\newcommand{\N}{\mathbb{N}}
\newcommand{\abs}[1]{|{#1}|} 
\newcommand{\bigabs}[1]{\left|{#1}\right|} 
\newcommand{\one}{\mathds{1}}
\newcommand{\Acal}{\mathcal{A}}
\newcommand{\Bcal}{\mathcal{B}}
\newcommand{\Lcal}{\mathcal{L}}
\newcommand{\Zcal}{\mathcal{Z}}
\newcommand*{\defeq}{\mathrel{\vcenter{\baselineskip0.5ex \lineskiplimit0pt
                     \hbox{\scriptsize.}\hbox{\scriptsize.}}}%
                     =}
\newcommand{\de}{\text{d}}
\newcommand{\norm}[1]{\|#1\|}
\newcommand{\bignorm}[1]{\left\|#1\right\|}
\newcommand{\oneto}[1]{[{#1}]}
\renewcommand{\Im}{\operatorname{Im}}
\DeclareMathOperator{\tr}{tr}
\DeclareMathOperator{\diag}{diag}
\theoremstyle{plain}
\newtheorem{lemma}{Lemma}
\newtheorem{theorem}[lemma]{Theorem}
\newtheorem{corollary}[lemma]{Corollary}
\theoremstyle{definition}
\newtheorem{definition}[lemma]{Definition}
\newtheorem{example}[lemma]{Example}
\newtheorem{remark}[lemma]{Remark}
\theoremstyle{remark}
\newcommand{\1}{\one}
\definecolor{darkblue}{rgb}{.1, 0.1,.8}
\definecolor{darkgreen}{rgb}{0,0.8,0.2}
\definecolor{darkred}{rgb}{.8, .1,.1}
\DeclareMathOperator{\e}{e}
\newcommand{\nto}{n \to \infty}
\newcommand{\pto}{p \to \infty}
\newcommand{\var}{{\rm var}}
\newcommand{\as}{{\rm a.s.}}
\newcommand{\MP}{Mar\v cenko--Pastur }
\newcommand{\cip}{\stackrel{\Prob}{\rightarrow}}
\begin{document}
\title[Curie--Weiss Matrices]
{High-dimensional sample covariance matrices with Curie-Weiss entries}
\thanks{M.F.'s research was supported by the FernUniversität in Hagen. J.H.~was supported by the Deutsche Forschungsgemeinschaft (DFG) via RTG 2131 High-dimensional Phenomena in Probability – Fluctuations and Discontinuity.}
\author[Michael Fleermann]{Michael Fleermann}
\author[Johannes Heiny]{Johannes Heiny}
\begin{abstract}
We study the limiting spectral distribution of sample covariance matrices $XX^T$, where $X$ are $p\times n$ random matrices with correlated entries, for the cases $p/n\to y\in [0,\infty)$. If $y>0$, we obtain the \MP distribution and in the case $y=0$ the semicircle distribution (after appropriate rescaling). The entries we consider are Curie-Weiss spins, which are correlated random signs, where the degree of the correlation is governed by an inverse temperature $\beta>0$. The model exhibits a phase transition at $\beta=1$. The correlation between any two entries decays at a rate of $O(np)$ for $\beta \in (0,1)$, $O(\sqrt{np}$) for $\beta=1$, and for $\beta>1$ the correlation does not vanish in the limit. In our proofs we use Stieltjes transforms and concentration of random quadratic forms.
\end{abstract}
\keywords{Curie--Weiss, random matrix, \MP law, semicircle law, high dimension, dependent entries, full correlation}
\subjclass{\textit{2010 MSC:} Primary 60B20; Secondary 60F05 60F10 60G10 60G55 60G70} 
\maketitle

\section{Introduction and Preliminaries}

 In many contemporary applications, one is 
faced with large data sets where both the dimension of the observations and the sample size are large.  In quantum mechanics, for example, the energy levels of particles in a large system can
be approximated by the eigenvalues of a large random matrix. Estimating the underlying covariance structure of high-dimensional data with the sample covariance matrix can be misleading \cite{bai:silverstein:2010,elkaroui:2009}. Even in the case of independent covariates, it is well-known that the sample covariance matrix poorly estimates the population covariance matrix. The fluctuations of the off-diagonal entries of the sample covariance matrix aggregate, creating an estimation bias which was quantified in 1967 by the famous \MP theorem  \cite{marchenko:pastur:1967}. Ever since, the classical setting of well-behaved i.i.d.\ ensembles was extended to investigate settings more aligned with reality. In many situations, it is reasonable to assume that entries in data sets are dependent. The dependence might span between different observations, but also between covariates of individual observations. In random matrix theory, one often considers models exhibiting linear dependence between the entries. Works that consider non-linear dependencies are sparse. The paper \cite{bai:zhou:2008}, for example, incorporates non-linear dependence within the columns of the data matrix, but assumes these columns to be independent. In this paper, we consider a data matrix filled with Curie-Weiss spins. This model exhibits nonlinear dependence between all entries.  For technical reasons, settings with correlated entries are harder to analyze, since many proof techniques break down in presence of correlations.

Another way to deviate from the classical setting is to assume that data might stem from heavy-tailed distributions. The theory for the eigenvalues and eigenvectors of the sample covariance matrices stemming from heavy-tailed time series with infinite fourth moment is quite different from the classical \MP theory which applies in the light-tailed case.
For detailed discussions about classical random matrix theory, we refer to the monographs \cite{bai:silverstein:2010,yao:zheng:bai:2015}, 
while the developments in the heavy-tailed case can be found in \cite{davis:mikosch:pfaffel:2016,davis:heiny:mikosch:xie:2016,heiny:mikosch:2017:iid,auffinger:arous:peche:2009, heiny:mikosch:2017:corr,heiny:mikosch:2019} and the references therein.

The \MP law gives insight into the spectrum of large dimensional sample covariance matrices. Assume we have $n$ observations $x_1,\ldots,x_n$, each with $p$ real-valued covariates, where $n, p\in\N$, so that $x_i=(x_i(1),\ldots,x_i(p))^T$ for all $i\in\{1,\ldots,n\}$. Define the $p\times n$ data matrix $X_n \defeq (x_1,x_2,\ldots,x_n)$, that is, $X_n$ has columns $x_i$. The (centered) sample covariance matrix is then defined by
\[
\tilde{V}_n \defeq \frac{1}{n-1} \sum_{k=1}^n (x_k-\bar{x})(x_k-\bar{x})^T,
\]
which is of dimension $p\times p$. Here, the vector $\bar{x}$ denotes the arithmetic mean of the vectors $x_k$. Assuming that the data stems from $n$ i.i.d.\ realizations of an $\R^p$-valued random vector $x$ with $\Lcal_2$-entries, $\tilde{V}_n$ is an unbiased estimator for its covariance matrix $\var(x)$.

The sample covariance matrix is of crucial importance in multivariate
statistics, for instance in principal component analysis, canonical correlation analysis,
multivariate regression, factor analysis, hypothesis testing and discriminant analysis.
Many test statistics are based on the eigenvalues of the sample covariance matrix. Examples include independence tests \cite{bodnar:dette:parolya:2019} and likelihood ratio tests. For the latter it is essential that the log-determinant of $\tilde{V}_n$ can be written as $\log(\lambda_1)+ \cdots + \log(\lambda_p)$, where $(\lambda_i)$ are the eigenvalues of $\tilde{V}_n$.

When analyzing the limiting spectral distribution (LSD) of the eigenvalues, it suffices to consider the (non-centered) sample covariance matrix
\begin{equation}
\label{eq:Sn}	
V_n \defeq \frac{1}{n}\sum_{k=1}^n x_k x_k^T = \frac{1}{n}X_nX_n^T,
\end{equation}
since $\bar{x}\bar{x}^T$ is of rank $1$, see Theorem A.44 in \cite{bai:silverstein:2010}. From now on we will refer to $V_n$ as the sample covariance matrix. Our object of interest in this paper will be the limit of the empirical spectral distributions (ESD) $F_{V_n}$ defined as
\begin{equation*}
F_{V_n}(x)= \frac{1}{p}\; \sum_{i=1}^p \1_{\{ \lambda_i(V_n)\le x \}}, \qquad x\in  \R\,,
\end{equation*} 
where $\lambda_1(V_n)\ge \cdots \ge \lambda_p(V_n)$ are the ordered eigenvalues of $V_n$. If such a limit exists in the sense of weak convergence almost surely, we call it the limiting spectral distribution of $V_n$.

Also, we will assume that the number of covariates $p$ and the sample size $n$ are large and tend to infinity together.
In this paper, the sample size $n$ is a function of the dimension $p$ (cf.\ Remark~{rem:npdependence}) and the dimension increases at most proportionally to the sample size. To be precise, we assume
\begin{equation}\label{Cgamma}
n=n_p \to \infty \quad \text{ and } \quad \frac{p}{n_p}\to y\in [0,\infty)\,,\quad \text{ as } \pto\,.
\end{equation}
The constant $y$ controls the growth of the dimension relative to the sample size. Most of the random matrix literature focuses exclusively on the case $y>0$, while the case $y=0$ plays only a minor role. In many fields, however, the wider range of possible growth rates arising in the $y=0$ regime is desirable. The framework in this paper unifies these two lines of research.

\subsection{Background}

Before we present our model, we provide some background.
Assume that the entries of $X_n$ are i.i.d.\ with unit variance and zero mean. Then if $p/n \to y\in (0,\infty)$ the limiting spectral distribution of $(V_n)$ is the so-called \MP distribution $\mu^y$.
The (standard) MP distribution with ratio index $y\in(0,\infty)$ is the probability measure $\mu^{y}$ on $(\R,\Bcal)$ given by
\[
\mu^{y} = \frac{1}{2\pi xy}\sqrt{((1+\sqrt{y})^2-x)(x-(1-\sqrt{y})^2)} \one_{((1-\sqrt{y})^2,(1+\sqrt{y})^2)}(x)\lebesgue(\de x) + \left(1-\frac{1}{y}\right)\delta_0 \one_{y>1},
\]
where $\lebesgue$ denotes the Lebesgue measure on $(\R,\Bcal)$ and $\delta_0$ denotes the Dirac measure in $0$.

It is well-known that measures on $\R$ are uniquely characterized by their Stieltjes transforms \cite{yao:zheng:bai:2015}. The Stieltjes transform of $\mu^{y}$ is given for $z\in\C_+=\{c\in \C: \Im(c)>0\}$ by
\begin{equation*}\label{eq:MPstieltjes}
S_{\mu^{y}}(z):=\int_{\R} \frac{1}{x-z} \mu^y(dx) 
=\frac{1-y-z+\sqrt{(1-y-z)^2-4yz}}{2yz},
\end{equation*}
where throughout this paper, if $ z\in\R_+$, $\sqrt{z}$ denotes the positive square root, while if $z\in\C\backslash\R_+$, then $\sqrt{z}$ denotes the complex square root with positive imaginary part; see for example \cite{bai:silverstein:2010}. 
If $p/n \to \infty$, we observe $\delta_0$ as LSD of $V_n$, as there are at most $\min(p,n)$ positive eigenvalues of $V_n$. 

In the case $p/n \to 0$, the limiting spectral distribution of $V_n$ is the Dirac measure at $1$. After centering $V_n$ by the identity matrix $I$ and a subsequent appropriate rescaling, one can obtain a non-degenerate limiting spectral distribution. In \cite{bai:yin:1988} it is proved under the additional assumption $\E[X_n(1,1)^4]<\infty$ that the empirical spectral distribution of the matrices $\sqrt{n/p}\, (V_n- I)$
converges to the semicircle law $G$ with Lebesgue density
\begin{equation*}\label{eq:semicircle}
g(x)= \tfrac{1}{2\pi} \sqrt{4-x^2} \1_{[-2,2]}(x) \,,\qquad x\in\R,
\end{equation*}
and Stieltjes transform
\begin{equation*}
s_G(z)=\frac{-z + \sqrt{z^2-4}}{2}\,,\qquad z\in\C_+.
\end{equation*}
The i.i.d.\ assumption on the entries of the data matrix $X_n$ can be relaxed to linear dependence of the form $\Sigma_n^{1/2}X_n$ for symmetric positive definite deterministic matrices $\Sigma_n$ with uniformly bounded spectral or operator norm $\|\Sigma_n\|:= \sqrt{\lambda_1(\Sigma_n \Sigma_n^T)}$. For $p/n\to y>0$, the Stieltjes transform of the LSD of $n^{-1} \Sigma_n^{1/2}X_n X_n^T \Sigma_n^{1/2}$ can then be characterized via the LSD of $\E[V_n]=\Sigma_n$; see \cite{bai:silverstein:2010} for details. The same holds in the case $p/n\to 0$ for the LSD of $\sqrt{n/p}\, (n^{-1} \Sigma_n^{1/2}X_n X_n^T \Sigma_n^{1/2}- \Sigma_n)$ as proved in \cite{pan2012asymptotic} and \cite{wang:paul:2014}.

It is important to note that the linear dependence between the entries of $X_n$ was a crucial assumption for the above results. For nonlinear dependencies the situation becomes more delicate as the following examples will show. We present two examples of random matrices $Y_n$ with dependent entries and $\E[n^{-1} Y_n Y_n^T]=I$ for which the LSD of $n^{-1} Y_n Y_n^T$ is not the \MP distribution $\mu^y$. 

\begin{example} Assume that the entries of $X_n$ are i.i.d.\ continuous random variables with unit variance, zero mean and let $p/n\to y>0$.
Kendall's Tau is a U-statistic which measure the association of random variables. For higher dimensional observations, such as the columns $x_i=(x_i(1),\ldots,x_i(p))^T$
of the data matrix $X$,  the (empirical) Kendall's Tau matrix is defined as 
\begin{equation*}
\tau_n=\tfrac{2}{n(n-1)}  \sum_{1\le s<t\le n} \operatorname{sign}(x_s-x_t) (\operatorname{sign}(x_s-x_t))^T\,, 
\end{equation*}
where $\operatorname{sign}$ of a vector is taken coordinatewise. In particular, one sees that $\tau_n(i,i)=1$. Since $X_n$ has i.i.d.\ continuous entries, we have $\E[\tau_n]=I$.
Bandeira et al.\ \cite{bandeira:lodhia:rigollet:2017} proved that the empirical spectral distribution $F_{\tau_n}$ of $\tau_n$ converges, namely
\begin{equation*}\label{eq:rigollet}
F_{\tau_n}\cip \tfrac{2}{3} \, \xi + \tfrac{1}{3}\,, \quad p \to \infty\,,
\end{equation*}
where the random variable $\xi$ has a \MP distribution with parameter $y$. In Theorem \ref{thm:CWMP} we will observe a similar scaling phenomenon. The exact formula for $Y_n$ such that $n^{-1} Y_n Y_n^T=\tau_n$ can be found in \cite{bao:2017}.
\end{example}
\begin{example}
Assume that the entries of $X_n$ are i.i.d.\ symmetric random variables with tails $\Prob(|x_1(1)|>t)=t^{-\alpha} \ell(t)$, where $\alpha\in (0,2)$ and $\ell$ is a slowly varying function at infinity. Under the regime $p/n\to y>0$, set $V_n=n^{-1} X_n X_n^T$ and consider the sample correlation matrices
\begin{equation*}
R_n= (\diag(V_n))^{-1/2} V_n (\diag(V_n))^{-1/2}\,.
\end{equation*}
It was shown in \cite{heiny:mikosch:2017:corr} that $\E[R_n]=I$. However, from \cite[Theorem 3.1 part (2)]{heiny:mikosch:2017:corr} we know that 
\begin{equation*}
\liminf_{\nto} \E\Big[\int x^k F_{R_n}(dx)\Big]> \beta_k(y)\,,\qquad k\ge 4\,,
\end{equation*}
where $\beta_k(y)$ is the $k$-th moment of the \MP law $\mu^y$. Since $\mu^y$ is uniquely characterized by its moments, the LSD of $R_n$ cannot be $\mu^y$.
\end{example}

\subsection{Our model}
We will consider a data matrix $X_n$ with correlated entries. To this end, we introduce the Curie-Weiss model which is an exactly solvable model of ferromagnetism. ``Because of its
simplicity and because of the correctness of at least of some of its predictions, the Curie-Weiss model occupies an important place in the statistical mechanics literature and its
application to information theory \cite{kochmanski2013curie}.''
The first time that random matrices with Curie-Weiss spins were analyzed was in \autocite{Friesen:zwei}, with subsequent improvements in \cite{hochstaetter:kirsch:warzel:2016,kirsch:kriecherbauer:2018,FKK2019,FleermannCWLL}, where the last two publications are based on \autocite{FleermannDiss}. All of these texts were concerned with Wigner type matrices and convergence to the semicircle distribution.

\begin{definition}\label{def:curieweiss}
Let $n\in\N$ be arbitrary and $Y_1,\ldots,Y_n$ be random variables defined on some probability space $(\Omega,\Acal,\Prob)$. Let $\beta>0$, then we say that $Y_1,\ldots,Y_n$ are Curie-Weiss($\beta$,$n$)-distributed\label{sym:CurieWeissdist}, if for all $y_1,\ldots,y_n\in\{-1,1\}$ we have that
\[
\Prob(Y_1=y_1,\ldots,Y_n=y_n) = \frac{1}{Z_{\beta,n}}\cdot \e^{\frac{\beta}{2n}\left(\sum_{i=1}^n y_i\right)^2}\,,
\]
where $Z_{\beta,n}= \sum_{y_1,\ldots,y_n \in \{-1,1\}} \e^{\frac{\beta}{2n}\left(\sum_{i=1}^n y_i\right)^2}$\label{sym:CWconstant} is a normalization constant. The parameter $\beta$ is called \emph{inverse temperature}.
\end{definition} 

Note that in above definition, $(Y_1,\ldots,Y_n)$ is an exchangeable random vector, since the probability of any spin configuration $(y_1,\ldots,y_n)$ only depends on the sum of the spins. The Curie-Weiss($\beta,n$) distribution is used to model the behavior of $n$ ferromagnetic particles (spins) at the inverse temperature $\beta$. At low temperatures (if $\beta$ is large), all magnetic spins are likely to have the same alignment, resembling a strong magnetic effect. On the contrary, at high temperatures (if $\beta$ is small), spins can act almost independently, resembling a weak magnetic effect. The model exhibits a phase transition at $\beta=1$, meaning that the behavior of the distribution varies significantly in the realms $\beta\in (0,1)$, $\beta=1$ and $\beta>1$. To exemplify a manifestation of this phase transition, we formulate the following result; see Theorem 5.17 in \cite{KirschMomentSurvey}.
\begin{lemma}
\label{lem:CWcorrelations}
Fix $l\in \N$ and let for all  $n\ge l$,  $(Y^{(n)}_1,\ldots,Y^{(n)}_l)$ be part of a Curie-Weiss($\beta,n$) distributed random vector.  If $l$ is even, the following statements hold:
\begin{enumerate}[i)]
\item If $\beta<1$, then for some constant $c(\beta,l)>0$, $\E Y^{(n)}_1\cdots Y^{(n)}_l \sim c(\beta,l) n^{-l/2}$ as $n\to\infty$.
\item If $\beta=1$, then for some constant $c(l)>0$, $\E Y^{(n)}_1\cdots Y^{(n)}_l \sim c(l)n^{-l/4}$ as $n\to\infty$.
\item If $\beta>1$, then $\E Y^{(n)}_1\cdots Y^{(n)}_l \sim m^l$ as $n\to\infty$, where $m\in (0,1)$ is the unique positive number such that $\tanh(\beta m)=m$.
\end{enumerate}
If $l$ is odd, then for all $\beta>0$ one has $\E Y^{(n)}_1\cdots Y^{(n)}_l = 0$.
\end{lemma}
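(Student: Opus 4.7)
The plan is to exploit the \emph{Hubbard--Stratonovich transformation}, which linearizes the quadratic term $\e^{\frac{\beta}{2n}(\sum y_i)^2}$ in the Curie--Weiss density by introducing an auxiliary Gaussian integration variable, and then to analyze the resulting one-dimensional integrals via Laplace's method separately in each of the three $\beta$-regimes.

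For odd $l$ the statement is immediate: the Curie--Weiss density depends on $(y_1,\ldots,y_n)$ only through $(\sum y_i)^2$, so the distribution of $(Y^{(n)}_1,\ldots,Y^{(n)}_n)$ is invariant under the global sign-flip $y_i\mapsto -y_i$. This gives $\E[Y^{(n)}_1\cdots Y^{(n)}_l]=(-1)^l\E[Y^{(n)}_1\cdots Y^{(n)}_l]$, which forces the expectation to vanish whenever $l$ is odd.

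For even $l$, I would apply the identity $\e^{\frac{\beta}{2n}S^2}=\sqrt{\frac{n}{2\pi\beta}}\int_\R \e^{-\frac{nt^2}{2\beta}+tS}\,\de t$ with $S=\sum_{i=1}^n y_i$ and then sum over $y_i\in\{-1,1\}$. Conditional on the auxiliary variable $t$ the spins become i.i.d.\ signs with mean $\tanh t$, and a short calculation reduces $\E[Y^{(n)}_1\cdots Y^{(n)}_l]$ to the ratio
\[
\E[Y^{(n)}_1\cdots Y^{(n)}_l] \;=\; \frac{\int_\R (\tanh t)^l\, \e^{n f(t)}\,\de t}{\int_\R \e^{n f(t)}\,\de t}, \qquad f(t)\defeq -\frac{t^2}{2\beta}+\log\cosh t,
\]
whose critical points satisfy $t=\beta\tanh t$. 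The three cases then follow from Laplace's method: (i) for $\beta<1$, $t=0$ is the unique maximum with $f''(0)=(\beta-1)/\beta<0$; the rescaling $t=s/\sqrt n$ together with $\tanh(s/\sqrt n)\sim s/\sqrt n$ yields the prefactor $n^{-l/2}$ with $c(\beta,l)$ being the $l$-th moment of the centered Gaussian with variance $\beta/(1-\beta)$. (ii) For $\beta=1$ one has $f''(0)=0$ and $f(t)=-t^4/12+O(t^6)$; the correct rescaling becomes $t=s/n^{1/4}$, producing the prefactor $n^{-l/4}$ with $c(l)=\int_\R s^l \e^{-s^4/12}\de s/\int_\R \e^{-s^4/12}\de s$. (iii) For $\beta>1$ the function $f$ has two symmetric non-degenerate maxima at $\pm t^\ast$ with $\tanh(t^\ast)=m$; Laplace's method around each peak produces the same leading exponential factor in the numerator and denominator, and the $\tanh^l$-factor evaluates to $m^l$ at both peaks (coinciding because $l$ is even), so the ratio converges to $m^l$.

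The main technical obstacle is the critical case $\beta=1$: since $f''(0)=0$, the standard quadratic Laplace expansion fails, and one has to use the fourth-order Taylor coefficient of $f$ and justify the unusual $n^{1/4}$-scaling via a dominated convergence argument. A clean way to handle this is to split the integration into a shrinking neighborhood $|t|\le \delta$ of the origin, on which the rescaled integrand is uniformly dominated by $C\,\e^{-s^4/24}\,|s|^l$, and its complement, on which strict sub-linearity $f(t)\le -\kappa t^2$ for some $\kappa>0$ forces super-exponentially small contributions. Once this scaling is justified, the explicit constants in all three regimes drop out of the corresponding Gaussian or quartic integrals.
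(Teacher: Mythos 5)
Your proposal is mathematically correct, but note that the paper does not actually prove Lemma~\ref{lem:CWcorrelations}: it cites it directly as Theorem~5.17 in the referenced survey \cite{KirschMomentSurvey}, so there is no in-text proof to compare against. Your route via the Hubbard--Stratonovich transformation followed by Laplace asymptotics in the three temperature regimes is the standard way this result is established, and your handling of the odd-$l$ case by the global spin-flip symmetry, the $n^{-1/2}$-scaling with a quadratic exponent when $\beta<1$, the degenerate $n^{-1/4}$-scaling with the quartic exponent $-s^4/12$ at $\beta=1$, and the two symmetric non-degenerate peaks at $\pm t^\ast$ with $\tanh t^\ast=m$ when $\beta>1$ are all sound (one can verify $f''(\pm t^\ast)=-1/\beta+1-m^2<0$ from stability of the fixed point $m=\tanh(\beta m)$). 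It is worth pointing out that your auxiliary variable $t$ is, after the change of variables $s=\tanh t$, exactly the de~Finetti mixing variable $M^\beta_n$ that the paper introduces in Lemma~\ref{lem:curiedefinetti}: one has $n f(t)=-\tfrac{n}{2}F_\beta(s)$ and $\diff t = \diff s/(1-s^2)$, which reproduces the stated density $f^\beta_n$. So your computation is essentially the same machinery the paper relies on downstream, just phrased in the Hubbard--Stratonovich coordinate rather than the conditional-mean coordinate; the de~Finetti phrasing has the small advantage that the conditionally i.i.d.\ structure of the spins is immediate, which is what the paper exploits in its main proof, whereas the Gaussian-linearization phrasing makes the Laplace analysis slightly cleaner because $f$ is entire.
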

Note that in the setting of Lemma \ref{lem:CWcorrelations}, the correlation $\E Y^{(n)}_1 Y^{(n)}_2$ is of a different order for the three regions of $\beta$.
If $\beta<1$, the correlation $\E Y^{(n)}_1 Y^{(n)}_2$ decays at a rate of $n^{-1}$. For the critical temperature $\beta=1$ the decay rate is $n^{-1/2}$ , whereas if $\beta>1$, the correlation $\E Y^{(n)}_1 Y^{(n)}_2$ converges to $m^2$ and hence does not vanish as $n\to \infty$. In our main result Theorem \ref{thm:CWMP}, we will see that for $\beta>1$  a different normalization of the sample covariance matrix is required to account for the correlation at level $m^2$.

\subsubsection*{Objective and structure of this paper} The aim of this paper is to characterize the LSD of the sample covariance matrices $V_n=n^{-1} X_nX_n^T$, where $X_n$ follows a Curie-Weiss distribution. At the critical temperature $\beta=1$ a phase transition occurs. In Section \ref{sec:mainresult}, we see that the LSD is a possibly rescaled \MP or semicircle distribution. Section \ref{sec:proof} contains some useful lemmas and the proof of our main result.

\subsubsection*{Notation}
For simplicity of notation, we define for all $n\in\N$: $\oneto{n}\defeq\{1,\ldots,n\}$. Further, whenever there is no ambiguity about the dimension we denote the identity matrix by $I$.

\section{Main result}\label{sec:mainresult}

Our main result characterizes the limiting spectral distributions of sample covariance matrices with Curie-Weiss entries with parameter $\beta>0$ in the regimes $p/n\to y>0$ and $p/n\to 0$. 

\begin{theorem}
\label{thm:CWMP}
Assume \eqref{Cgamma} and that the entries of the $p\times n$ matrix $X_n$ are Curie-Weiss$(\beta,np)$ distributed with $\beta>0$,
where we assume that $(X_n(i,j))_{i\in [p], j\in [n]}$ are defined on a common probability space. Denote by $F_n$ the ESD of $V_n\defeq n^{-1}X_nX_n^T$. 
\begin{itemize}
\item[(i)] Assume $\beta\in(0,1]$. If $p/n\to y\in(0,\infty)$, then $(F_n)_n$ converge weakly almost surely to the \MP distribution $\mu^{y}$, as $p\to \infty$. If $p/n\to 0$, then the ESDs of $\sqrt{\tfrac{n}{p}} (V_n- I)$ converge weakly almost surely to the semicircle distribution $G$, as $p\to \infty$.
\item[(ii)] Assume $\beta\in(1,\infty)$ and let $m$ be the unique number in $(0,1)$ satisfying $\tanh (m\beta)=m$. If $p/n\to y\in(0,\infty)$, then the ESDs of $(1-m^2)^{-1} V_n$ converge weakly almost surely to the \MP distribution $\mu^{y}$, as $p\to \infty$. If $p/n\to 0$, then the ESDs of $\sqrt{\frac{n}{p}} \left(\frac{1}{1-m^2}V_n-I\right)$ converge weakly almost surely to the semicircle distribution $G$, as $p\to \infty$.
\end{itemize}
\end{theorem}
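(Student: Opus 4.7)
My plan is to reduce the Curie--Weiss model to the classical i.i.d.\ setting by applying the Hubbard--Stratonovich transformation, and then to invoke the standard Stieltjes-transform machinery. With $N = np$, the Gaussian identity $\exp\bigl(\beta(\sum_k y_k)^2/(2N)\bigr) \propto \int \exp\bigl(-N\beta s^2/2 + \beta s\sum_k y_k\bigr)\,\mathrm{d}s$ realises the Curie--Weiss law as a scale mixture: there exists an auxiliary random variable $S$ with density proportional to $\cosh^N(\beta s)\,e^{-N\beta s^2/2}$ such that, conditional on $S=s$, the $N$ entries of $X_n$ are i.i.d.\ $\pm 1$-valued with mean $\mu_s := \tanh(\beta s)$ and variance $\sigma_s^2 := 1-\mu_s^2$. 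Laplace's method applied to $f(s) = \log\cosh(\beta s) - \beta s^2/2$, whose stationary points solve $\tanh(\beta s) = s$, shows $S$ concentrates at $0$ for $\beta \le 1$ and at the symmetric pair $\pm m$ with $\tanh(\beta m)=m$ for $\beta>1$, with Gaussian fluctuations of order $N^{-1/2}$ (of order $N^{-1/4}$ at the critical $\beta=1$).

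\textbf{Rank perturbation and i.i.d.\ limits.} For fixed $s$, decompose $X_n = \mu_s \mathbf{1}_p\mathbf{1}_n^T + Z_n$, where, conditional on $S=s$, $Z_n$ has i.i.d.\ centred entries of variance $\sigma_s^2$. Expanding $V_n = n^{-1}X_nX_n^T$ produces $V_n = \sigma_s^2\,\tilde V_n + R_n$, where $\tilde V_n = n^{-1}\tilde Z_n\tilde Z_n^T$ is built from the standardised variables $\tilde Z_n = \sigma_s^{-1}Z_n$, and $R_n$ is a sum of three rank-one matrices arising from $\mu_s^2\,\mathbf{1}_p\mathbf{1}_p^T$ and the two cross terms $n^{-1}\mu_s \mathbf{1}_p (Z_n\mathbf{1}_n)^T + $ transpose. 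The rank inequality (Theorem~A.44 in \cite{bai:silverstein:2010}) then yields that $V_n$ and $\sigma_s^2\tilde V_n$ share the same LSD. Conditional on $S=s$, the classical Marchenko--Pastur law gives $\tilde V_n \to \mu^y$ for $p/n\to y>0$, and the Bai--Yin theorem gives $\sqrt{n/p}(\tilde V_n - I)\to G$ for $p/n\to 0$.

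\textbf{Assembly and main obstacle.} In regime $\beta\le 1$, $\sigma_S^2\to 1$ and $\mu_S\to 0$, so $V_n\to \mu^y$ and $\sqrt{n/p}(V_n-I)\to G$. In regime $\beta>1$, by the reflection symmetry $X_n\mapsto -X_n$ of both the Curie--Weiss law and $V_n$, one may localise to $S\to +m$; then $\sigma_S^2\to 1-m^2$, giving $(1-m^2)^{-1}V_n\to \mu^y$ and $\sqrt{n/p}((1-m^2)^{-1}V_n-I)\to G$. In the latter step, the random mismatch $\sigma_S^2/(1-m^2)-1 = O_{\Prob}(N^{-1/2})$ multiplied by $\sqrt{n/p}$ yields an identity shift of order $1/p\to 0$, which is harmless. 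The main technical difficulty is upgrading the conditional-on-$S$ statements to \emph{almost sure} convergence under the joint Curie--Weiss law. Following the paper's stated tools, I would verify the Marchenko--Pastur/semicircle fixed-point equation for the Stieltjes transform $s_{V_n}(z) = p^{-1}\tr(V_n-zI)^{-1}$ via the Schur-complement decomposition of the resolvent. The crucial estimate is then a concentration inequality for the quadratic form $x_k^\top A x_k$, with $x_k$ a column of $X_n$ and $A$ a deterministic $p\times p$ matrix independent of $x_k$. Under the Hubbard--Stratonovich representation, this reduces to a conditional Hanson--Wright bound for bounded i.i.d.\ entries, combined with a Laplace-type exponential estimate for the deviations of $S$; delivering the concentration with probability summable in $p$ then produces almost sure convergence via Borel--Cantelli.
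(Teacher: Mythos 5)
Your proposal takes essentially the same route as the paper: exploit the conditional i.i.d.\ structure (your Hubbard--Stratonovich variable $S$ and the paper's de Finetti mixing variable $M^\beta_{np}$ of Lemma~\ref{lem:curiedefinetti} are related by $M = \tanh(\beta S)$, with matching densities under this change of variable), derive the \MP/semicircle fixed-point equation for the Stieltjes transform via the Schur complement, control the error by concentration of the quadratic form $\alpha_k^T F(X_n^{(k)})\alpha_k$ conditionally on the mixing variable, and finish with Borel--Cantelli. Your rank-perturbation paragraph is a useful heuristic the paper does not spell out; it correctly explains where the factor $(1-m^2)^{-1}$ comes from.

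Two spots in your sketch need to be tightened. First, ``conditional Hanson--Wright'' is not quite the right tool: Hanson--Wright assumes centered variables, whereas conditionally on $S=s$ the entries have common nonzero mean $\tanh(\beta s)$, so the quadratic form carries a drift term of order $\tanh^2(\beta s)\,\bigl|\sum_{i\ne j}F_{ij}\bigr|$ and a cross term of order $\tanh(\beta s)$ times an $\ell^2$-norm of row-sums of $F$. The paper's Lemma~\ref{lem:largedev} produces precisely this expansion in powers of $|t|$, which is then matched against the moment decay of $M^\beta_{np}$ from Lemma~\ref{lem:curiedefinetti} (or of $\zeta,\psi$ from Lemma~\ref{lem:Curiehightemp} when $\beta>1$); one also needs the row-sum bounds (iii)--(iv) of Lemma~\ref{lem:matrixbounds}, which require boundedness of the entries and do not follow from the resolvent norm alone. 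Second, for $\beta>1$ you centre by the exact conditional mean $\mu_S$; the paper centres instead by the deterministic $\pm m$, keeping the perturbation rank one, but this creates a conditional-variance defect: the diagonal term $n^{-1}\tilde\alpha_k^T\tilde\alpha_k-1$ no longer vanishes identically. This is the term $\tilde C(n,k,q)$ in the paper, and it needs its own high-moment estimate and Borel--Cantelli argument. Calling the defect an ``identity shift of order $1/p$'' undersells it --- the shift is random, of conditional size governed by $\psi(M^\beta_{np})$, and must be shown to be $o(\sqrt{p/n})$ uniformly in $k$, not merely in probability.
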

By Lemma \ref{lem:CWcorrelations}, the correlations between the entries of $X_n$ increase with the value $\beta$. 
Theorem \ref{thm:CWMP} shows that for $\beta\le 1$  the correlation is still weak enough to not affect the LSD, in the sense that we obtain the same LSD as for a sample of i.i.d.\ random variables. For $\beta>1$ the asymptotic behavior of the correlations changes drastically. Consequently, a different normalization of the sample covariance matrix is required to account for the correlation at constant level $m^2$.
\begin{remark}
\label{rem:npdependence}
The convergence in Theorem \ref{thm:CWMP} is for $p\to \infty$, which is standard in the $p/n\to 0$ literature; see for example \cite{bai:yin:1988,pan2012asymptotic,wang:paul:2014}. If there exists a $\delta>0$ such that $n^\delta/p\to 0$, the convergence also holds for $\nto$; compare also with \cite[Corollary 2]{elkaroui:2009}. Indeed, $n^\delta/p\to 0$ for some $\delta>0$ is equivalent to $p_n^{-a}$ being summable over $n$ for some large $a>0$, which is required for the Borel-Cantelli argument in the proof of Theorem \ref{thm:CWMP}.
Our formulation with $p\to \infty$ is slightly more flexible because it also allows choices such as $p=\log n$. 
\end{remark}
\begin{figure}
   \includegraphics[width=0.5\textwidth]{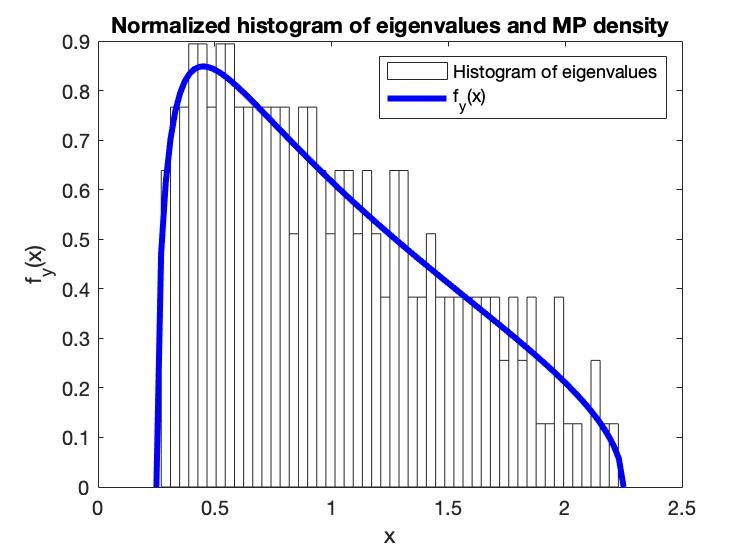}
   \caption{Simulation for $(p,n)=(200,800)$ and $\beta=0.5$. In blue: Density $f_{1/4}$. Histogram: Eigenvalues of $n^{-1}X_nX_n^T$.}
	\label{fig:beta05}
\end{figure}

In Figure 1 and Figure 2, we can see a simulation output where a $200\times 800$ random matrix with Curie-Weiss entries was simulated, using the Metropolis algorithm with $16\cdot 10^6$ steps. We compare the histogram of the eigenvalues with the \MP density $f_{p/n}$,
\begin{equation*}
f_y(x) =
\frac{1}{2\pi xy} \sqrt{((1+\sqrt{y})^2-x)(x-(1-\sqrt{y})^2)} \one_{((1-\sqrt{y})^2,(1+\sqrt{y})^2)}(x)\,, \qquad x\in \R, y\in (0,1]\,. 
\end{equation*}
While in Figure \ref{fig:beta05}, the ensemble was simulated for $\beta = 0.5$, in Figure \ref{fig:beta129} we used $\beta=1.29727$, so that $\tanh(\beta m) = m$ holds for $m=3/4$. The largest eigenvalue of $n^{-1}XX^T$ resp.\ $(n(1-m^2))^{-1}$ was roughly $113$ resp.\ $258$ and was excluded from the histogram in Figure \ref{fig:beta129}.

\begin{figure}
    {\centering
    \begin{minipage}[t]{0.5\linewidth}
        \centering
        \includegraphics[clip=true, trim=0cm 0cm 0cm 0cm, width=\linewidth]{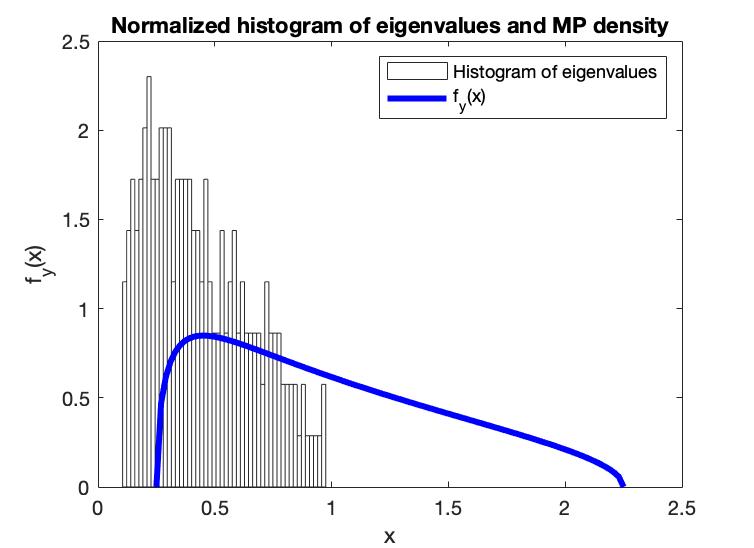}
    \end{minipage}
    \hfill
    \begin{minipage}[t]{0.5\linewidth}
        \centering
        \includegraphics[clip=true, trim=0cm 0cm 0cm 0cm, width=\linewidth]{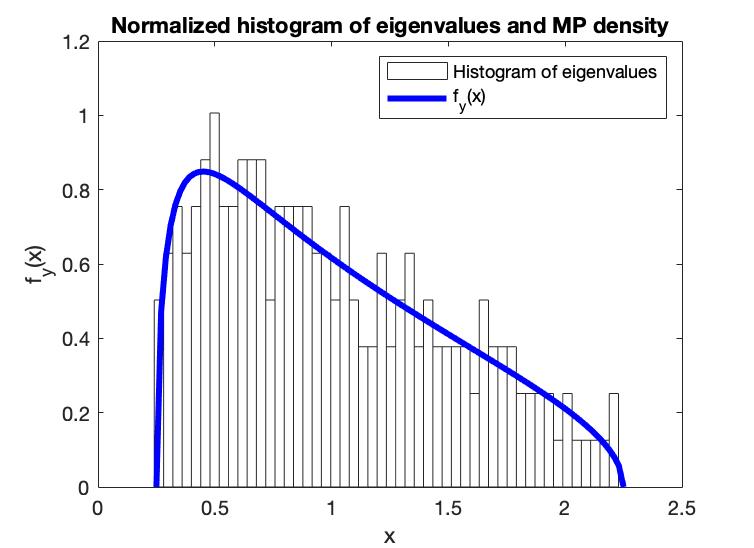}
    \end{minipage}}
    \caption{Simulation for $(p,n)=(200,800)$ and $\beta=1.29727$. In blue: Density $f_{1/4}$. Left histogram: Eigenvalues of $n^{-1}X_nX_n^T$. Right histogram: Eigenvalues of $(n(1-m^2))^{-1} X_nX_n^T$, where $m=3/4$, so that $\tanh(\beta m) =m$.}
\label{fig:beta129}
\end{figure}

In the proof of Theorem~\ref{thm:CWMP}, we use techniques developed in \cite{FleermannCWLL} and \cite{FKKzwei}.
An important tool is the fact that Curie-Weiss($\beta,n$) spins $(Y_1,\ldots,Y_n)$ are \emph{conditionally i.i.d.} That is, without loss of generality we can assume that they are defined on the same probability space as a Lebesgue-continuous mixing variable $M^{\beta}_n$ with support $[-1,1]$, such that conditioned on $M^{\beta}_n=t\in(-1,1)$, $(Y_1,\ldots,Y_n)$ are i.i.d.\ $P_t$-distributed, where $P_t$ is the probability measure on $\{\pm 1\}$ with
\[
P_t(1) = \frac{1+t}{2} \qquad \text{and} \qquad P_t(-1) = \frac{1-t}{2}.
\]
Next, we collect some properties of the mixing variable $M_n^{\beta}$ in the following lemma which is taken from \autocite{KirschMomentSurvey}; see Theorem 5.6, Remark 5.7, Proposition 5.9 and Theorem 5.17 therein.

\begin{lemma}
\label{lem:curiedefinetti}
If $Y=(Y_1,\ldots,Y_n)$ are Curie-Weiss($\beta$,$n$)-distributed for some $\beta>0$ and $n\in\N$, then w.l.o.g.\ there exists a random variable $M^{\beta}_n$ supported on $[-1,1]$ with the following properties.
\begin{enumerate}
\item The distribution of $M^{\beta}_n$ has Lebesgue-density $f^{\beta}_n$, 
\[
f^{\beta}_n(t) \defeq \frac{1}{\int_{(-1,1)} \frac{e^{-\frac{n}{2} F_{\beta}(s)}}{1-s^2} \lebesgue(\text{d}s)}  \frac{e^{-\frac{n}{2} F_{\beta}(t)}}{1-t^2}\one_{(-1,1)}(t)\,, \qquad t\in(-1,1),
\]
where for all $s\in(-1,1)$ we define
\[
F_{\beta}(s) \defeq \frac{1}{\beta} \left(\frac{1}{2}\ln\left(\frac{1+s}{1-s}\right)\right)^2 + \ln(1-s^2).
\]	
\item $\Prob^{M^{\beta}_n}$-almost surely, $\Prob^{Y|M^{\beta}_n=t} = \otimes_{i\in\oneto{n}} \Prob^{Y_i|M^{\beta}_n=t} = \otimes_{i\in\oneto{n}} P_t$. In words, conditionally on $M_n^{\beta}$ the $Y_1,\ldots,Y_n$ are i.i.d.\ $P_t$-distributed random variables.
\item If $\beta< 1$, the mixing variable $M^{\beta}_n$ satisfies the following moment decay:
\[
\forall~a\in 2\N: \int_{[-1,+1]} t^a \Prob^{M^{\beta}_n}(\de t) \leq \frac{K_{\beta,a}}{n^{\frac{a}{2}}}.   
\]
\item If $\beta= 1$, the mixing variable $M^{\beta}_n$ satisfies the following moment decay:
\[
\forall~a\in 2\N: \int_{[-1,+1]} t^a \Prob^{M^{\beta}_n}(\de t) \leq \frac{K_{\beta,a}}{n^{\frac{a}{4}}},   
\]
\end{enumerate}
where $K_{\beta,a}\in \R_+$\label{sym:CWdefinetticonstant} are constants that depend on $\beta$ and $a$ only. 
\end{lemma}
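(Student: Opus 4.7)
The plan is to apply the Hubbard--Stratonovich transformation to linearize the quadratic term in the Curie--Weiss density, which automatically produces a de Finetti-type mixture representation with an explicit mixing density, and then to estimate moments of that density by Laplace's method around the minimum of $F_\beta$.

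More precisely, I would start from the Gaussian identity
\[
\exp\!\left(\frac{\beta}{2n}\Big(\sum_{i=1}^n y_i\Big)^{\!2}\right) \;=\; \sqrt{\frac{n}{2\pi\beta}}\int_\R \exp\!\left(-\frac{n x^2}{2\beta} + x\sum_{i=1}^n y_i\right)\,\de x,
\]
and substitute into the Curie--Weiss weight of Definition \ref{def:curieweiss}. Writing $e^{x y_i} = 2\cosh(x)\, P_{\tanh(x)}(y_i)$ for $y_i\in\{-1,+1\}$ factorizes the product over $i$ and identifies $P_t$ as the conditional marginal after the change of variables $t=\tanh(x)$. Using $\de x = (1-t^2)^{-1}\de t$ and $\cosh(x)=(1-t^2)^{-1/2}$, the joint probability becomes
\[
\Prob(Y_1=y_1,\ldots,Y_n=y_n) \;=\; \int_{(-1,1)} f^{\beta}_n(t)\prod_{i=1}^n P_t(y_i)\,\lebesgue(\de t),
\]
with $f^{\beta}_n$ exactly as in part (1) of the lemma; reading off the marginal of $M^\beta_n$ proves (1), and dividing through by $f^\beta_n(t)$ yields the conditional product form in (2).

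For the moment bounds in (3) and (4), I would analyze $f^\beta_n$ via Laplace's method at the critical point $s=0$. A direct computation gives
\[
F_\beta'(s) \;=\; \frac{2}{1-s^2}\!\left(\frac{\operatorname{arctanh}(s)}{\beta}-s\right),\qquad F_\beta''(0)=\frac{2(1-\beta)}{\beta},
\]
so that $0$ is the unique global minimum of $F_\beta$ on $(-1,1)$ when $\beta\le 1$, strictly non-degenerate for $\beta<1$ and degenerate with $F_\beta(s)=s^4/6+O(s^6)$ for $\beta=1$ (after expanding $\operatorname{arctanh}(s)=s+s^3/3+\ldots$). For $\beta<1$, rescaling by $u=\sqrt{n}\,t$ in the integral $\int t^a e^{-nF_\beta(t)/2}(1-t^2)^{-1}\de t$ and dominating the integrand uniformly in $n$ (by a Gaussian tail on a neighborhood of $0$ and by an exponentially small contribution away from $0$, since $F_\beta(s)>0$ for $s\ne 0$) yields $\E[(M^\beta_n)^a]=O(n^{-a/2})$; the normalization $Z_n\defeq \int e^{-nF_\beta/2}(1-s^2)^{-1}\de s$ is itself $\asymp n^{-1/2}$ by the same argument, so the ratio has the claimed order. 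For $\beta=1$, the same recipe with the rescaling $u=n^{1/4}t$ gives $\E[(M^1_n)^a]=O(n^{-a/4})$.

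The only subtle step is the Laplace estimate for $\beta=1$: one has to verify that $F_1$ has a strictly positive quartic leading term at $0$ and no other zeros on $(-1,1)$, and that the logarithmic blow-up of $(1-s^2)^{-1}$ at the endpoints is harmless because $F_1(s)/(\ln(1/(1-s^2)))\to\infty$ as $|s|\to 1$. With those ingredients, splitting the integral into a shrinking neighborhood of $0$ (where the rescaling is used) and its complement (where one uses a uniform lower bound $F_1(s)\ge c>0$) gives matching upper and lower bounds of order $n^{-1/4}$ on $Z_n$ and upper bounds of the right order on the $a$-th moment, from which (4) follows; the constants $K_{\beta,a}$ absorb the resulting Gaussian / quartic-Gaussian moment integrals.
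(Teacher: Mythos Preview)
Your argument is correct and complete in outline: the Hubbard--Stratonovich identity followed by the substitution $t=\tanh(x)$ does produce precisely the density $f^\beta_n$ and the product-measure conditional law, and your Laplace-method estimates with the rescalings $u=\sqrt{n}\,t$ (for $\beta<1$) and $u=n^{1/4}t$ (for $\beta=1$) give the stated moment bounds; the auxiliary facts you flag---that $F_\beta$ has $0$ as its unique global minimum on $(-1,1)$ for $\beta\le 1$, that $F_1(s)=s^4/6+O(s^6)$, and that $F_\beta(s)\to+\infty$ fast enough at the endpoints to absorb the $(1-s^2)^{-1}$ factor---are all easily verified from the explicit formula for $F_\beta'$.

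As for comparison with the paper: the paper does not prove this lemma at all. It is stated as a quotation of results from \textsc{Kirsch}'s moment-method survey (Theorem~5.6, Remark~5.7, Proposition~5.9 and Theorem~5.17 there), and no argument is given in the present text. Your Hubbard--Stratonovich/Laplace approach is in fact the standard route to these statements and is essentially what the cited reference does, so you have supplied what the paper outsources.
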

 
In the case $\beta>1$ we will work with suitably restandardized Curie-Weiss spins in order to use the following lemma which can be found in \cite{FKKzwei}.
 
\begin{lemma}
\label{lem:Curiehightemp}
Let $(Y_1,\ldots,Y_n)$ be Curie-Weiss($\beta,n$)-distributed with $\beta>1$ and mixing variable $M^{\beta}_n$. Denote by $m\in(0,1)$ the unique positive number satisfying $\tanh (m\beta)=m$. For $i\in \{1,\ldots,n\}$ define
\[
Z_i \defeq \frac{1}{\sqrt{1-m^2}} \big(Y_i - m\one_{M^{\beta}_n>0} + m\one_{M^{\beta}_n<0}\big).
\]
Then $(Y_1,\ldots,Y_n)$ are conditionally i.i.d.\ given $M^{\beta}_n$ and the following statements hold.
\begin{enumerate}
\item Almost surely, $(Y_1,\ldots,Y_n)$ takes values in $\{\frac{\pm 1 +m}{\sqrt{1-m^2}}\}^n\cup  \{\frac{\pm 1 -m}{\sqrt{1-m^2}}\}	^n$.
\item For each $i\in \{1,\ldots,n\}$, 
\begin{align*}
\E(Z_i|M^{\beta}_n=t) = \zeta(t) &\defeq \begin{cases}
 \frac{1}{\sqrt{1-m^2}}(t-m), & \quad t>0,\\
 \frac{1}{\sqrt{1-m^2}}(t+m), & \quad t<0,
\end{cases}\\
\E(1-Z^2_i|M^{\beta}_n=t) = \psi(t) &\defeq \begin{cases}
 \frac{2m}{1-m^2}(t-m), & \quad t>0,\\
 \frac{2m}{1-m^2}(t+m), & \quad t<0.
\end{cases}
\end{align*}
\item We obtain the following bounds on the moments of $\zeta(M^{\beta}_n)$ and  $\psi(M^{\beta}_n)$:
\begin{align*}
\forall~a\in 2\N: & \int_{[-1,+1]} \abs{\zeta(t)}^a \Prob^{M^{\beta}_n}(\de t) \leq \frac{K_{\beta,a}}{n^{\frac{a}{2}}},  \\
\forall~a\in 2\N: & \int_{[-1,+1]} \abs{\psi(t)}^a \Prob^{M^{\beta}_n}(\de t) \leq \frac{K_{\beta,a}}{n^{\frac{a}{2}}}. 
\end{align*}
\end{enumerate}
Here, the constants $K_{\beta,a}>0$ depend only on $\beta$ and $a$.
\end{lemma}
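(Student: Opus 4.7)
The plan is to verify each claim by direct computation, leveraging the explicit de Finetti-type representation from Lemma~\ref{lem:curiedefinetti} together with a Laplace-type analysis of the mixing density $f^\beta_n$. The conditional i.i.d.\ statement for $(Y_1,\ldots,Y_n)$ will follow verbatim from Lemma~\ref{lem:curiedefinetti}(2); since each $Z_i$ is a fixed Borel function of $Y_i$ and the common $M^\beta_n$, the $Z_i$ automatically inherit the conditional i.i.d.\ structure.

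For part (1), I will split on the sign of $M^\beta_n$ (which equals zero only on a null set since $M^\beta_n$ is Lebesgue-continuous). On the event $\{M^\beta_n>0\}$ all indicators collapse so $Z_i=(Y_i-m)/\sqrt{1-m^2}\in\{(\pm 1-m)/\sqrt{1-m^2}\}$; on $\{M^\beta_n<0\}$ symmetrically $Z_i=(Y_i+m)/\sqrt{1-m^2}\in\{(\pm 1+m)/\sqrt{1-m^2}\}$. Because the sign of $M^\beta_n$ is common to all coordinates, the whole vector $(Z_1,\ldots,Z_n)$ lies in one of the two product sets stated. For part (2), I will use that conditionally on $M^\beta_n=t$ each $Y_i$ is $P_t$-distributed, so $\E[Y_i\mid M^\beta_n=t]=t$ and $Y_i^2=1$ a.s. On $\{t>0\}$,
\[
\E[Z_i\mid M^\beta_n=t]=\frac{t-m}{\sqrt{1-m^2}}=\zeta(t),
\]
and expanding $Z_i^2=(1-2mY_i+m^2)/(1-m^2)$ and taking conditional expectation gives $\E[1-Z_i^2\mid M^\beta_n=t]=2m(t-m)/(1-m^2)=\psi(t)$. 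The case $t<0$ is analogous with $+m$ in place of $-m$.

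The substantive work is part (3). The critical points of $F_\beta$ on $(-1,1)$ solve $\operatorname{atanh}(s)=\beta s$, and for $\beta>1$ these are $0$ (a local maximum) and $\pm m$ (the two global minima), with $F_\beta''(\pm m)>0$. I will write
\[
\int_{[-1,1]}|\zeta(t)|^a f^\beta_n(t)\,\de t=\frac{\int_{(-1,1)}|\zeta(t)|^a(1-t^2)^{-1}e^{-nF_\beta(t)/2}\,\de t}{\int_{(-1,1)}(1-t^2)^{-1}e^{-nF_\beta(t)/2}\,\de t}
\]
and run Laplace's method around each of $\pm m$. Since $|\zeta(t)|^a=(1-m^2)^{-a/2}|t\mp m|^a$ near the relevant minimum, the rescaling $u=\sqrt{n}(t\mp m)$ extracts exactly the factor $n^{-a/2}$ and converts the integrand into $|u|^a e^{-F_\beta''(\pm m)u^2/4}$ times a smooth prefactor, yielding a finite Gaussian moment. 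The same rescaling applied to the denominator produces the standard $n^{-1/2}$ normalization, which cancels from numerator and denominator. Contributions from the complements of fixed neighborhoods of $\pm m$ are exponentially small in $n$ because $F_\beta$ has a strictly positive gap there, and they are absorbed into $K_{\beta,a}$. The bound on $\E|\psi(M^\beta_n)|^a$ is identical because $\psi$ is a constant multiple of the same linear factor $(t\mp m)$.

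The main obstacle lies in making the Laplace estimate uniform in $n$ so as to produce a single constant $K_{\beta,a}$: this requires the explicit nondegeneracy computation $F_\beta''(\pm m)=2(\beta-1)/(\beta(1-m^2))>0$, a uniform lower bound on the normalizing denominator (obtained by restricting to symmetric neighborhoods of $\pm m$ and applying the same Taylor expansion), and an exponential tail estimate controlling the integrals outside these neighborhoods. These are standard asymptotic-analysis steps that parallel the arguments already used in \cite{FKKzwei}, so aside from bookkeeping the proof is routine once the Laplace setup is in place.
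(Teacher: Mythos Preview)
The paper does not give its own proof of this lemma; it simply states that the result ``can be found in \cite{FKKzwei}'' and cites that reference. So there is no in-paper argument to compare against. Your proposal is a correct and self-contained sketch of the standard proof: parts (1) and (2) are elementary conditional computations using Lemma~\ref{lem:curiedefinetti}(2), and part (3) is the expected Laplace-method analysis of the density $f^\beta_n$ around its two nondegenerate minima $\pm m$, which is exactly the approach used in the reference the paper cites.

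One small correction: your explicit formula $F_\beta''(\pm m)=2(\beta-1)/(\beta(1-m^2))$ is not quite right. A direct differentiation of $F_\beta'(s)=\tfrac{2}{1-s^2}\bigl(\beta^{-1}\operatorname{atanh}(s)-s\bigr)$ at $s=m$ (where the bracket vanishes) gives
\[
F_\beta''(m)=\frac{2}{1-m^2}\Bigl(\frac{1}{\beta(1-m^2)}-1\Bigr)=\frac{2\bigl(1-\beta(1-m^2)\bigr)}{\beta(1-m^2)^2}\,.
\]
The positivity you need still holds, since $m=\tanh(\beta m)$ implies $1-m^2=\operatorname{sech}^2(\beta m)<1/\beta$ (because the map $x\mapsto \tanh(x)-x/\beta$ is strictly decreasing at $x=\beta m$), hence $\beta(1-m^2)<1$. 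This slip does not affect the argument, but you should fix the constant before writing out the Gaussian moment bound.
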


\section{Proof of Theorem \ref{thm:CWMP}}\label{sec:proof}
We will prove the cases $\beta\leq 1$ and $\beta > 1$ separately, but before we begin, we will provide two lemmas which we will use throughout the proof. The first lemma is taken from \autocite{FleermannCWLL}, see their Theorem 39.

\begin{lemma}
\label{lem:largedev}
Let $n\in \N$ be arbitrary, $(a_{i,j})_{i,j\in\oneto{n}}$ and $(b_i)_{i\in\oneto{n}}$ be deterministic complex numbers, $(Y_i)_{i\in\oneto{n}}$ be independent and complex-valued random variables with common expectation $t\in\C$. Further, we assume that for all $a \geq 2$ there exists a $\mu_a\in\R_+$ such that $\norm{Y_i-t}_a:= \E[|Y_i-t|^{a}]^{1/a} \leq \mu_a$ for all $i\in\oneto{n}$. Then we obtain for all $a\geq 2$:
\begin{align*}
i)&\quad 	 \bignorm{\sum\limits_{i \in \oneto{n}} b_{i} Y_i}_a \leq  \left(A_a \mu_a
+ \sqrt{n}\abs{t} \right) 
\sqrt{\sum\limits_{i \in \oneto{n}}\abs{b_{i}}^2}\,,\\
ii)&\quad
\bignorm{\sum\limits_{\substack{i,j \in \oneto{n}\\i\neq j}} 
a_{i,j} Y_i Y_j}_a \leq 
A_a \mu_a \abs{t} \sqrt{\sum_{j\in\oneto{n}}\bigabs{\sum_{i\in\oneto{n}\backslash\{j\}}a_{i,j}}^2}
+ A_a \mu_a \abs{t} \sqrt{\sum_{i\in\oneto{n}}\bigabs{\sum_{j\in\oneto{n}\backslash\{i\}}a_{i,j}}^2}\\
&\quad+ 
4 A_a^2 \mu^2_a\sqrt{\sum\limits_{\substack{i,j \in \oneto{n}\\i\neq j}}\abs{a_{i,j}}^2} 
 + \abs{t}^2 \left|\sum\limits_{\substack{i,j \in \oneto{n}\\i\neq j}}a_{i,j} \right| 
\leq \left(4 A_a^2 \mu^2_a 
+ 2 A_a \mu_a \sqrt{n}\abs{t}   +
n \abs{t}^2\right) 
\sqrt{\sum\limits_{\substack{i,j \in \oneto{n}\\i\neq j}}\abs{a_{i,j}}^2}\,,
\end{align*}
where $A_a\in \R_+$ is positive constant depending only on $a$.
\end{lemma}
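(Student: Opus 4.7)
The plan is to write $Y_i = t + X_i$ with $X_i := Y_i - t$, so that the $X_i$ are independent, mean-zero, and satisfy $\|X_i\|_a \le \mu_a$. Both inequalities then reduce to standard moment bounds for linear and off-diagonal bilinear forms in the $X_i$'s, and the natural tool is the Marcinkiewicz--Zygmund inequality (equivalently, Rosenthal's inequality) for sums of independent centered random variables.

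For (i), Minkowski's inequality gives
\[
\Bigl\|\sum_{i\in\oneto{n}} b_i Y_i\Bigr\|_a \le \Bigl\|\sum_{i\in\oneto{n}} b_i X_i\Bigr\|_a + |t|\,\Bigl|\sum_{i\in\oneto{n}} b_i\Bigr|.
\]
The first term is bounded by $A_a \mu_a (\sum_i |b_i|^2)^{1/2}$ via Marcinkiewicz--Zygmund, and the second by $|t|\sqrt{n}(\sum_i |b_i|^2)^{1/2}$ via Cauchy--Schwarz, yielding (i).

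For (ii), expand $Y_iY_j = X_iX_j + tX_i + tX_j + t^2$ and split $\sum_{i\neq j} a_{ij}Y_iY_j$ into four pieces. The two mixed sums, $t\sum_i \bigl(\sum_{j\ne i} a_{ij}\bigr) X_i$ and $t\sum_j \bigl(\sum_{i\ne j} a_{ij}\bigr) X_j$, are linear forms in the centered $X$'s; the centered Marcinkiewicz--Zygmund bound with coefficients $\sum_{j\ne i} a_{ij}$ respectively $\sum_{i\ne j} a_{ij}$ produces the first two terms of the right-hand side. The deterministic piece $t^2\sum_{i\ne j} a_{ij}$ contributes the fourth term.

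The main work is the doubly-centered bilinear form $Q := \sum_{i\ne j} a_{ij} X_iX_j$. My approach is to combine a decoupling step with two applications of the centered Marcinkiewicz--Zygmund inequality. By standard decoupling for $U$-statistics of order two, $\|Q\|_a$ is within a universal constant of $\|\sum_{i,j} a_{ij} X_i X'_j\|_a$, where $(X'_j)$ is an independent copy of $(X_j)$. Conditioning on $(X'_j)$, the decoupled sum is a linear form in the $X_i$'s with random coefficients $c_i(X') := \sum_j a_{ij} X'_j$, so part (i) applied conditionally gives $\|Q\|_a \le C A_a \mu_a \bigl\|(\sum_i c_i(X')^2)^{1/2}\bigr\|_a$. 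Since $a \ge 2$, the generalized Minkowski inequality yields $\bigl\|(\sum_i c_i^2)^{1/2}\bigr\|_a \le (\sum_i \|c_i\|_a^2)^{1/2}$, and a second Marcinkiewicz--Zygmund application gives $\|c_i(X')\|_a \le A_a \mu_a (\sum_j |a_{ij}|^2)^{1/2}$. Combining these estimates yields $\|Q\|_a \le 4 A_a^2 \mu_a^2 (\sum_{i\ne j} |a_{ij}|^2)^{1/2}$ after absorbing the decoupling constant into $A_a$. Summing all four pieces gives the precise bound in (ii); the coarser bound follows from Cauchy--Schwarz, since $\sum_i |\sum_{j\ne i} a_{ij}|^2 \le n\sum_{i\ne j} |a_{ij}|^2$ and $|\sum_{i\ne j} a_{ij}| \le n(\sum_{i\ne j} |a_{ij}|^2)^{1/2}$. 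The main obstacle is tracking the universal constants through decoupling and the double MZ application so as to land on the claimed coefficient $4A_a^2\mu_a^2$; part (i) must be proved first because it is invoked conditionally inside the bilinear estimate.
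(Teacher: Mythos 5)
The paper supplies no proof of this lemma; its entire ``proof'' is the citation ``See~\autocite{FleermannCWLL}'' (Theorem~39 there). There is therefore no argument in this paper against which to compare yours, and I can only assess your outline on its own terms.

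On its own terms, your route is standard and, in outline, correct: centre the $Y_i$, bound linear forms by Marcinkiewicz--Zygmund plus Cauchy--Schwarz, expand $Y_iY_j=(X_i+t)(X_j+t)$ into four pieces, and handle the doubly-centred bilinear form $Q=\sum_{i\ne j}a_{ij}X_iX_j$ by order-$2$ decoupling, a conditional application of the centred linear bound, the $\ell^2$--$L^a$ Minkowski step (valid since $a\ge 2$), and a second MZ bound on the random coefficients $c_i(X')=\sum_{j\ne i}a_{ij}X'_j$. Two points need tightening. First, the $Y_i$ are complex-valued, so you should state explicitly that MZ is applied after separating real and imaginary parts, at the cost of an absolute factor that is harmless here; also, your decoupled sum should be written $\sum_{i\ne j}a_{ij}X_iX'_j$ rather than $\sum_{i,j}$, since the lemma excludes diagonal terms (equivalently, set $a_{ii}=0$). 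Second, and more substantively, ``absorbing the decoupling constant into $A_a$'' must be made precise: if $C_a$ is the MZ constant and $D$ is the universal order-$2$ decoupling constant, your estimate gives $D\,C_a^2\mu_a^2\bigl(\sum_{i\ne j}|a_{ij}|^2\bigr)^{1/2}$ for $Q$, and you must then set $A_a\defeq C_a\max\bigl(1,\tfrac12\sqrt{D}\bigr)$ and observe that every other inequality in the lemma is monotone nondecreasing in $A_a$, so that a single constant $A_a$, depending only on $a$, validates (i), the linear terms of (ii), and the bilinear term simultaneously. With these clarifications, and with your Cauchy--Schwarz passage $\sum_i\bigabs{\sum_{j\ne i}a_{ij}}^2\le n\sum_{i\ne j}|a_{ij}|^2$ and $\bigabs{\sum_{i\ne j}a_{ij}}\le n\bigl(\sum_{i\ne j}|a_{ij}|^2\bigr)^{1/2}$ for the coarser final bound, the argument is complete.
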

\begin{proof}
See \autocite{FleermannCWLL}.
\end{proof}
The following lemma allows us to apply Lemma \ref{lem:largedev} to the setting we will encounter in our proof.
\begin{lemma}
\label{lem:matrixbounds}
Let $X$ be a $p\times n$ matrix with real-valued entries, $z\in\C_+$. Define
\begin{equation}
\label{eq:Fdefined}
F(X)\defeq X^T\left(\frac{1}{n}XX^T-z\right)^{-1}X.
\end{equation}
Then we obtain the following bounds:
\[
i)\quad\sqrt{\sum_{i\neq j}^n\abs{F_{ij}(X)}^2}\leq n\sqrt{p} \left(1+\frac{\abs{z}}{\Im(z)}\right)\,, \qquad 
ii)\quad\abs{\tr F(X)} \leq np\left(1+\frac{\abs{z}}{\Im(z)}\right).
\]
Further, if all entries in $X$ are uniformly bounded by some $b>0$, we obtain:
\[
iii)\quad\bigabs{\sum_{i,j\in\oneto{n}} F_{ij}(X)} \leq \frac{b^2p}{\Im(z)} + \frac{1}{n}\left(1+ \frac{|z|}{\Im(z)}\right),\qquad iv)\quad \forall\,j\in\oneto{n} : \bigabs{\sum_{i\in\oneto{n}\backslash\{j\}} F_{ij}(X)} \leq \frac{b^2 p}{n\Im(z)}.
\]
\end{lemma}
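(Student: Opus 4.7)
My plan is to diagonalize $F(X)$ via the singular value decomposition of $X$. Write $X = U\Sigma V^T$, with $U$ a $p\times p$ orthogonal matrix, $V$ an $n\times n$ orthogonal matrix, and $\Sigma$ the $p\times n$ matrix carrying the singular values $\sigma_1,\ldots,\sigma_{\min(p,n)}$ on its main diagonal. A direct multiplication then shows $F(X) = V D V^T$, where $D$ is the $n\times n$ diagonal matrix with $D_{ii} = \sigma_i^2/(\sigma_i^2/n - z)$ for $i\le\min(p,n)$ and $D_{ii}=0$ otherwise. Rewriting each nonzero entry as $D_{ii} = n + n^2 z/(\sigma_i^2 - nz)$ and using $|\sigma_i^2 - nz|\ge n\,\Im(z)$ (because $\sigma_i^2\in\R$ and $\Im(z)>0$) yields the pointwise bound $|D_{ii}|\le n(1 + |z|/\Im(z))$, with at most $\min(p,n)\le p$ nonzero entries.

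Parts (i) and (ii) then follow immediately from the unitary invariance of the Frobenius and trace norms: $\sum_{i,j}|F_{ij}(X)|^2 = \|D\|_{\mathrm F}^2 \le p\,n^2(1+|z|/\Im(z))^2$, so dropping the diagonal and taking a square root gives (i); similarly $|\tr F(X)| = |\sum_i D_{ii}|\le p\cdot n(1+|z|/\Im(z))$ gives (ii).

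For (iii) and (iv), the key structural observation is that the formula $D_{ii} = n + n^2 z/(\sigma_i^2 - nz)$ continues to hold for \emph{all} $i\in\oneto{n}$ (indices with $\sigma_i^2=0$ contribute $n + n^2 z/(-nz) = 0$); combined with $X^TX = V\Sigma^T\Sigma V^T$, this yields the algebraic identity
\[
F(X) = n\,I_n + n^2 z\,(X^TX - nz\,I_n)^{-1}.
\]
Substituting this into $\sum_{i,j}F_{ij}(X) = \mathbf{1}_n^T F(X)\mathbf{1}_n$ and $\sum_{i\ne j}F_{ij}(X) = e_j^T F(X)\mathbf{1}_n - F_{jj}(X)$ reduces the two sums to weighted entries of $(X^TX - nz I_n)^{-1}$, and one then invokes the operator-norm bound $\|(X^TX - nzI_n)^{-1}\|_{\mathrm{op}}\le 1/(n\,\Im(z))$ together with the entrywise bound $|(X^TX)_{ij}|\le pb^2$ and the resolvent identity $(X^TX - nzI_n)^{-1}X^TX = I_n + nz(X^TX - nzI_n)^{-1}$. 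The main obstacle, in my view, is extracting this cancellation cleanly: a direct operator-norm bound on $F(X)\mathbf{1}_n$ would overshoot by a factor of $n$, and it is only after peeling off the $nI_n$ piece via the identity above that the remainder has a size commensurate with the stated estimates.
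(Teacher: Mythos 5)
Your SVD diagonalization $F(X)=VDV^T$ with $D_{ii}=\sigma_i^2/(\sigma_i^2/n-z)$ gives a clean and correct proof of parts (i) and (ii); the paper reaches the same bounds by noting that $\tfrac{1}{n}X^T(\tfrac{1}{n}XX^T-z)^{-1}X$ has the same non-zero eigenvalues as $I_p+z(\tfrac{1}{n}XX^T-z)^{-1}$ and then bounding the Frobenius (resp.\ trace) norm by $\sqrt p$ (resp.\ $p$) times the operator norm. Your rewriting $D_{ii}=n+n^2z/(\sigma_i^2-nz)$ is precisely the entrywise form of that resolvent identity, so the two arguments are essentially equivalent.

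For (iii) and (iv) your identity $F(X)=nI_n+n^2z(X^TX-nzI_n)^{-1}$ is correct and peeling off the $nI_n$ term is the right instinct, but the sketch cannot close the estimate, and indeed cannot as stated, because the claimed bound in (iii) is false. After peeling, $\sum_{i,j}F_{ij}(X)=n\,1_n^T(X^TX-nzI_n)^{-1}X^TX\,1_n$; the operator-norm bound together with $\|(X^TX-nzI_n)^{-1}X^TX\|\le 1+|z|/\Im(z)$ gives $n^2(1+|z|/\Im(z))$, and replacing the last step by a Cauchy--Schwarz bound on $\|X^TX1_n\|$ via $|(X^TX)_{ij}|\le pb^2$ gives $n^2pb^2/\Im(z)$; neither is of the order $b^2p/\Im(z)+\tfrac{1}{n}(1+|z|/\Im(z))$. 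Concretely, for the all-$b$ matrix $X=b\,1_p1_n^T$ one has $\sum_{i,j}F_{ij}=n^2b^2p/(b^2p-z)$, which with $p=n$, $b=1$, $z=i$ is of size $n^2$, exceeding the stated bound $n+2/n$. The defect originates in the paper itself: since $Y^T(YY^T-z)^{-1}Y=\tfrac{1}{n}F(X)$, the displayed equality $n\bigl|\sum_{i,j}F_{ij}(X)\bigr|=\bigl|1_n^TY^T(YY^T-z)^{-1}Y1_n\bigr|$ in the paper's proof of (iii) should read $\tfrac{1}{n}\bigl|\sum_{i,j}F_{ij}(X)\bigr|=\cdots$, so what the Silverstein--Bai rank-one lemma and Yaskov's inequality actually prove is $\bigl|\sum_{i,j}F_{ij}\bigr|\le n^2pb^2/\Im(z)+n(1+|z|/\Im(z))$ and $\bigl|\sum_{i\ne j}F_{ij}\bigr|\le npb^2/\Im(z)$, each a factor $n^2$ larger than stated; the same typo propagates to (iv). Note also that the paper's mechanism for (iii)--(iv) is genuinely different from yours: it perturbs $YY^T$ by the rank-one matrix $Y1_n1_n^TY^T$ and invokes two quadratic-form inequalities, whereas you stay on the $X^TX$ side and use the resolvent identity. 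Before completing your sketch you should fix the lemma statement and then re-examine Corollary~\ref{cor:SecondFbounds}, since the corrected bounds change the exponents appearing there.
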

\begin{proof}
To prove i), we recall that
\begin{enumerate}[a)]
\item $\text{Spectrum}(X^T(XX^T-z)^{-1}X)\cup\{0\} = \text{Spectrum}((XX^T-z)^{-1}XX^T)\cup\{0\}$,
\item $(XX^T-z)^{-1}XX^T= I + z (XX^T-z)^{-1}$,
\end{enumerate}
and that $\norm{\cdot}_F\leq\sqrt{m}\norm{\cdot}$ for $m\times m$ matrices, where $\norm{\cdot}_F$ denotes the Frobenius norm and $\norm{\cdot}$ denotes the operator norm. Therefore,

\begin{align*}
\sqrt{\sum_{i\neq j}^n\abs{F_{ij}(X)}^2}&\leq \sqrt{ \sum_{i\neq j}^n \bigabs{\left[X^T\left(\frac{1}{n}XX^T-z\right)^{-1} X\right](i,j)}^2} 
= n \bignorm{\frac{1}{n}X^T\left(\frac{1}{n}XX^T-z\right)^{-1} X}_F\notag\\
&= n \bignorm{\left(\frac{1}{n}XX^T-z\right)^{-1} \left(\frac{1}{n}XX^T\right)}_F
= n \bignorm{I_{p-1} + z\left(\frac{1}{n}XX^T-z\right)^{-1} }_F\notag\\
&\leq n\sqrt{p} \bignorm{I_{p-1} + z\left(\frac{1}{n}XX^T-z\right)^{-1}}
\leq n\sqrt{p} \left(1+\frac{|z|}{\Im(z)}\right).
\end{align*}

For ii) we calculate 
\begin{equation*}\label{eq:bound1}
\begin{split}
&\abs{\tr F(X)}= n \bigabs{\tr\left(\left(\frac{1}{n}XX^T-z\right)^{-1} \left(\frac{1}{n}XX_n^T\right) \right)}\\
&\leq n p \bignorm{\left(\frac{1}{n}XX^T-z\right)^{-1} \left(\frac{1}{n}XX^T\right)} \leq np \left(1+\frac{\abs{z}}{\Im(z)}\right),
\end{split}
\end{equation*}
where the last step follows as in the proof of i). This shows ii). For iii) let $1_n\defeq(1,\ldots,1)^T\in \R^n$ and $Y\defeq n^{-1/2}X$. Then we see that 
\begin{equation*}\label{eq:bound2}
\begin{split}
n &\bigabs{ \sum_{i,j\in\oneto{n}} F_{ij}(X)} = \bigabs{1_n^T Y^T(YY^T-z)^{-1}Y1_n}\\
&\leq \bigabs{1_n^T Y^T \left[(YY^T-z)^{-1} - (YY^T +Y1_n1_n^TY^T-z)^{-1}\right]Y1_n}\\
&\quad+ \bigabs{1_n^T Y^T  (YY^T +Y1_n1_n^TY^T-z)^{-1} Y1_n} =:P_1+P_2\,.
\end{split}
\end{equation*}
By \cite[Lemma~2.6]{silverstein:bai:1995}, one has
\begin{equation*}
P_1\leq \frac{\norm{Y1_n1_n^T Y^T}}{\Im(z)}=\frac{\abs{1_n^T Y^TY1_n}}{\Im(z)}= \frac{1}{\Im(z)}\bigabs{\frac{1}{n} \sum_{i\in\oneto{n}}\sum_{j\in\oneto{p}}\sum_{s\in\oneto{n}} X_{ij}X_{js}} \leq \frac{b^2 np}{\Im(z)}\,.
\end{equation*}
To bound $P_2$, recall from \cite{yaskov:2016} that for a real, symmetric, positive semidefinite $m\times m$ matrix $M$; $x\in \R^m$, $z\in \C_+$ the following inequality holds:
\begin{equation}\label{eq:le3}
\bigabs{ x^T (M+xx^T-z)^{-1}x}\leq 1+ \frac{|z|}{\Im(z)}\,.
\end{equation}
So in particular $P_2$ is bounded by the r.h.s.\ of \eqref{eq:le3}. This yields the bound
\[
\bigabs{ \sum_{i, j\in\oneto{n}} F_{ij}(X)} \leq \frac{P_1 + P_2}{n} \leq \frac{b^2p}{\Im(z)} + \frac{1}{n}\left(1+ \frac{|z|}{\Im(z)}\right).
\]
Lastly, to show $iv)$ let $Y$ be defined as before and let $j\in\oneto{n}$ be arbitrary. Denote by $y$ the $j$-th standard basis vector of $\R^n$ and let $x\defeq 1_n-y$. Then, using that $Yy x^T Y^T$ has rank one,
\begin{align*}
&n\bigabs{\sum_{i\in\oneto{n}\backslash\{j\}} F_{ij}(X)}= \bigabs{x^T Y^T(YY^T-z)^{-1}Yy}  = \bigabs{\tr \left[(YY^T-z)^{-1}Yy x^T Y^T\right]}\\
&\leq \bignorm{(YY^T-z)^{-1}Yy x^T Y^T} \leq  \bignorm{(YY^T-z)^{-1}} \bignorm{Yy x^T Y^T}\\
&\leq \frac{1}{\Im(z)}\bigabs{x^T Y^TYy} = \frac{1}{\Im(z)}\bigabs{\frac{1}{n}\sum_{i\in\oneto{n}\backslash \{j\}} \sum_{s\in\oneto{p}} X_{js}X_{is}}\leq \frac{b^2 p}{\Im(z)}.
\end{align*}
\end{proof}

\subsection{The case $\beta\le 1$.}
 We will show that the Stieltjes transforms converge.  For a real symmetric matrix $M\in \R^{p\times p}$ we denote by $S_M$ the Stieltjes transform of $M$, that is:
\[
\forall\, z\in\C_+: S_M(z) = \frac{1}{p}\tr(M-z)^{-1}.
\]
Further, we write $s_n$ for the Stieltjes transform of $V_n$.

 
Fix a $z\in\C_+$. Our starting point is the following identity, which is easy to verify: 

\begin{equation}\label{eq:stgser}
S_{y_n^{-1/2}(V_n-I)}(z)= y_n^{1/2} s_n(1+y_n^{1/2} z),\, \quad \text{where } y_n \defeq\frac{p}{n}.
\end{equation}
For simplicity of notation we write $\eta= \Im(z)>0$ and  $q= q_n = 1+y_n^{1/2} z$. Note that $\Im(q)=\eta\sqrt{p/n}$.
We know by equation (3.3.6) in \cite{bai:silverstein:2010} that 
\begin{align}
s_n(q) &= \frac{1}{p}\sum_{k\in\oneto{p}} \frac{1}{\frac{1}{n}\alpha_k^T\alpha_k-q-\frac{1}{n^2}\alpha_k^T X_n^{(k)}\left(\frac{1}{n}X_n^{(k)}X_n^{(k)T}-q\right)^{-1}X_n^{(k)}\alpha_k} \notag \\
&= \frac{1}{1 - q - y_n - y_n q s_n(q)} - \delta_n(q) \label{eq:sneq},
\end{align}
where $\alpha_k^T$ is the $k$-th row of $X_n$ (note that $\alpha_k$ also depends on $n$, which we drop from the notation), $X_n^{(k)}$ is $X_n$ with its $k$-th row removed (thus a $(p-1)\times n$-matrix). Further, 
\begin{equation}\label{eq:delta}
\delta_n(q) = \frac{1}{p} \sum_{k\in\oneto{p}} \frac{\Omega_k^{(n)}(q)}{(1-q-y_n-y_nqs_n(q))(1-q-y_n-y_n q s_n(q)+\Omega^{(n)}_k(q))},
\end{equation}
where for all $k\in\{1,\ldots,p\}$:
\begin{align*}
\Omega_k^{(n)}(q) &= \underbrace{\frac{1}{n}\alpha_k^T\alpha_k - 1}_{=0} - \frac{1}{n^2}\alpha_k^T X_n^{(k)T}\left(\frac{1}{n}X_n^{(k)}X_n^{(k)T}-q\right)^{-1}X_n^{(k)}\alpha_k + y_n + y_n q s_n(q).
\end{align*}
Solving \eqref{eq:sneq}, we obtain analogously to \cite[pp.~55 and 56]{bai:silverstein:2010} that
\begin{equation}\label{eq:sols1}
s_{n}(q)= \frac{1}{2 y_n q} \left(1-q-y_n-y_n q \delta_n(q)+\sqrt{(1-q-y_n+y_n q \delta_n(q))^2-4 y_n q}\right)\,.
\end{equation}
If $y_n\to y>0$, we see  from \eqref{eq:sols1} that $s_n(q)$ converges almost surely to $S_{\mu^y}(1+\sqrt{y}z)$ provided $\delta_n(q)\to 0$ almost surely as $\nto$. Here $S_{\mu^y}$ is the Stieltjes transform of the \MP law $\mu^y$. Then also $s_n(1+\sqrt{y}z)\to S_{\mu^{y}}(1+\sqrt{y}z)$ almost surely for all $z\in\C_+$, since all $s_n$ are $(\min_n\Im(\sqrt{y_n}z))^{-2}<\infty$ Lipschitz on the relevant domain. Therefore, $\mu_n\to\mu^y$ weakly almost surely.

If $y_n\to 0$, a straightforward calculation using \eqref{eq:sols1} and the definition of $q$ yields as $p\to \infty$,
\begin{equation*}
y_n^{1/2} \,s_{n}(q)
= \frac{-z+y_n^{1/2} \delta_n(q) q +\sqrt{z^2-4+2z y_n^{1/2}\delta_n(q) +2 y_n \delta_n(q) +y_n\delta_n^2(q) }}{2}+o(1)\,.
\end{equation*}
We see that $S_{y_n^{-1/2}(V_n-I)}(z)= y_n^{1/2} s_n(q)$ converges almost surely to the Stieltjes transform $s_G(z)$ of the semicircle law provided 
\begin{equation}\label{eq:grsgrsd}
\lim_{p \to \infty} y_n^{1/2} \delta_n(q) =0\quad \as
\end{equation}
Thus, condition \eqref{eq:grsgrsd} suffices for both cases $p/n\to y>0$ and $p/n \to 0$.
It remains to prove \eqref{eq:grsgrsd}.

\subsection{Proof of \eqref{eq:grsgrsd}}

Recall the definition of $\delta_n(q)$ in \eqref{eq:delta}. First, we lower bound the denominator.
By (3.3.13) in \cite{bai:silverstein:2010} and p.~57 below (3.3.15), we have
\begin{equation}\label{eq:dsgsdss}
\begin{split}
\Im(1-q-y_n -y_n q s_{n}(q))&\le - \Im (q)\,,\\
\Im(1-q-y_n -y_n q s_{n}(q)+\Omega^{(n)}_k(q))&\le - \Im (q)\,.
\end{split}
\end{equation}
Using \eqref{eq:dsgsdss} we see that
\begin{equation*}\label{eq:sufficesaverage}
\abs{y_n^{1/2} \delta_n(q)} \leq y_n^{1/2} \abs{\Im(q)}^{-2} \frac{1}{p}\sum_{k\in\oneto{p}}\abs{\Omega_k^{(n)}(q)} = \eta^{-2} y_n^{-1/2}\frac{1}{p}\sum_{k\in\oneto{p}}\abs{\Omega_k^{(n)}(q)}\,.
\end{equation*}
In particular, it suffices to show that 
\begin{equation}\label{eq:segtgts}
\lim_{p \to \infty} y_n^{-1/2} \max_{k=1,\ldots,p}\abs{\Omega_k^{(n)}(q)}=  0 \quad \as 
\end{equation}
Now we prove \eqref{eq:segtgts}. Note that 
\begin{align*}
&\Omega_k^{(n)}(q)
= - \frac{1}{n^2}\alpha_k^T X_n^{(k)T}\left(\frac{1}{n}X_n^{(k)}X_n^{(k)T}-q\right)^{-1}X_n^{(k)}\alpha_k + y_n + y_n z s_n(z)\notag\\
&=  \left(- \frac{1}{n^2} \sum_{i\neq j}^n \alpha_k(i) \left[ X_n^{(k)T}\left(\frac{1}{n}X_n^{(k)}X_n^{(k)T}-q\right)^{-1} X_n^{(k)}\right](i,j) \alpha_k(j) \right)\\
&\quad\, + \left( - \frac{1}{n^2} \tr X_n^{(k)T}\left(\frac{1}{n}X_n^{(k)}X_n^{(k)T}-q\right)^{-1} X_n^{(k)} + y_n + y_n q s_n(z)\right)\\
&=: A(n,k,q) + B(n,k,q).
\end{align*}
We analyse $B(n,k,q)$ first. We have
\begin{align*}
-\frac{1}{n^2}\tr X_n^{(k)T}\left(\frac{1}{n}X_n^{(k)}X_n^{(k)T}-q\right)^{-1}X_n^{(k)}
&= -\frac{1}{n} \tr\left[I_{p-1} + q \left(\frac{1}{n}X_n^{(k)}X_n^{(k)T}-q\right)^{-1}\right]\\
&=-\frac{p}{n} + \frac{1}{n} - \frac{q}{n}\tr\left(\frac{1}{n}X_n^{(k)}X_n^{(k)T}-q\right)^{-1}\,.
\end{align*}
Hence, using $y_n=p/n$, $\Im(q)=\sqrt{y_n}\eta$, and (A.1.12) in \cite{bai:silverstein:2010}, we find
\begin{align*}
\abs{B(n,k,q)} &= \bigabs{ -\frac{p}{n} + \frac{1}{n} - \frac{q}{n}\tr\left(\frac{1}{n}X_n^{(k)}X_n^{(k)T}-q\right)^{-1} + y_n + y_n q\frac{1}{p}\tr\left(\frac{1}{n}X_nX_n^T-q\right)^{-1}}\\
&\leq \frac{1}{n} + \frac{\abs{q}}{n\Im(q)} = \frac{1}{n} + \frac{\abs{1+\sqrt{y_n}z}}{n\sqrt{y_n}\eta}.
\end{align*}
Since this bound holds uniformly for all $k\in\{1,\ldots,p\}$, it follows that 
\[
y_n^{-1/2}\max_{k=1,\ldots,p}\abs{B(n,k,q)} \leq \frac{1}{n \sqrt{p/n}} + \frac{\abs{1+z\sqrt{p/n}}}{p\eta} \xrightarrow[p\to\infty]{} 0 \quad \as
\]
It is left to show that $y_n^{-1/2}\max_{k=1,\ldots,p}\abs{A(n,k,q)} \to 0$ almost surely. We do so by bounding 
the terms 
\[
S(n,k,q)\defeq n^2 A(n,k,q) = \sum_{i\neq j}^n \alpha_k(i) \left[X_n^{(k)T}\left(\frac{1}{n}X_n^{(k)}X_n^{(k)T}-q\right)^{-1} X_n^{(k)}\right](i,j) \alpha_k(j)	
\]
using Lemma~\ref{lem:largedev} and Lemma~\ref{lem:matrixbounds}. In accordance with Lemma~\ref{lem:matrixbounds}, we consider the \emph{symmetric} matrix
\[
F\left(X_n^{(k)}\right) = X_n^{(k)T}\left(\frac{1}{n}X_n^{(k)}X_n^{(k)T}-q\right)^{-1} X_n^{(k)}.
\]
 Using $q=1+\sqrt{y_n}z$, we draw the following corollary of Lemma~\ref{lem:matrixbounds}:
\begin{corollary}\label{cor:SecondFbounds}
For any $a\in\N$ there exists a constant $C_a>0$ independent of $n$ and $p$ such that for any $k\in\oneto{p}$ and any realization $X$ of $X^{(k)}_n$, we find
\begin{enumerate}[i)]
\item $\left(\sum_{i\neq j}\abs{F_{ij}(X)}^2\right)^{\frac{a}{2}} \leq C_a n^{3a/2}$
\item $\bigabs{\sum_{i\neq j} F_{ij}(X)}^a \leq C_a n^{3a/2}p^{a/2}$
\item $\left(\sum_{j\in\oneto{n}}\bigabs{\sum_{i\in\oneto{n}\backslash\{j\}} F_{ij}(X)}^2\right)^{\frac{a}{2}} \leq C_a p^{a/2}$.
\end{enumerate}

\end{corollary}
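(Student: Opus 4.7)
All three bounds come directly from Lemma \ref{lem:matrixbounds} applied to the matrix $X = X_n^{(k)}$ (of size $(p-1)\times n$ with $\pm 1$ entries, so $b=1$ in parts iii) and iv) of that lemma) and to the specific spectral parameter $q = 1 + \sqrt{y_n}\,z \in \C_+$. The two bookkeeping observations that drive everything are
\[
\Im(q) = \sqrt{y_n}\,\eta, \qquad 1 + \frac{|q|}{\Im(q)} \leq C_z\bigl(1 + \sqrt{n/p}\bigr),
\]
where $\eta = \Im(z) > 0$, together with the fact that under assumption \eqref{Cgamma} the sequence $y_n = p/n$ is bounded above by some constant $M$, so that $n\sqrt{p} \leq \sqrt{M}\, n^{3/2}$ and $\sqrt{p/n} \leq \sqrt{M}$.

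For part i), Lemma \ref{lem:matrixbounds}(i) immediately gives
\[
\sqrt{\sum_{i\neq j}|F_{ij}(X)|^2} \leq n\sqrt{p-1}\Bigl(1 + \tfrac{|q|}{\Im(q)}\Bigr) \leq C\, n\sqrt{p}\cdot \sqrt{n/p} = C\, n^{3/2},
\]
and raising to the $a$-th power finishes i). For part iii), Lemma \ref{lem:matrixbounds}(iv) applied with $b=1$ yields, uniformly in $j \in \oneto{n}$,
\[
\Bigl|\sum_{i\in\oneto{n}\backslash\{j\}} F_{ij}(X)\Bigr| \leq \frac{p-1}{n\,\Im(q)} = \frac{p-1}{\sqrt{np}\,\eta} \leq \eta^{-1}\sqrt{p/n};
\]
squaring and summing over $j\in\oneto{n}$ produces $\sum_j|\cdot|^2 \leq \eta^{-2} p$, which upon raising to the $a/2$-th power gives iii).

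For part ii), the plan is to decompose $\sum_{i\neq j} F_{ij}(X) = \sum_{i,j} F_{ij}(X) - \tr F(X)$. Lemma \ref{lem:matrixbounds}(iii) (with $b=1$) bounds the full sum by $(p-1)/\Im(q) + O(1) = O(\sqrt{np})$, whereas Lemma \ref{lem:matrixbounds}(ii) gives $|\tr F(X)| \leq n(p-1)(1 + |q|/\Im(q)) \leq C\, n^{3/2}\sqrt{p}$. Since $\sqrt{np} \leq n^{3/2}\sqrt{p}$ for $n \geq 1$, the trace term dominates, and the triangle inequality followed by raising to the $a$-th power gives the desired $C_a\, n^{3a/2} p^{a/2}$ bound. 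The corollary is therefore essentially bookkeeping and there is no genuine obstacle; the only subtle point worth highlighting is that the substitution $q = 1+\sqrt{y_n}\,z$ inflates the factor $1+|q|/\Im(q)$ of Lemma \ref{lem:matrixbounds} by a multiplicative $\sqrt{n/p}$, and this is precisely what upgrades the naive bounds $n\sqrt{p}$ in i) and $np$ in ii) to the sharper $n^{3/2}$ and $n^{3/2}\sqrt{p}$ that are needed to carry the proof of \eqref{eq:segtgts} through in the $y=0$ regime.
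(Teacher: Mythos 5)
Your proposal is correct and follows essentially the same route as the paper's own proof: apply Lemma~\ref{lem:matrixbounds} (with $b=1$ for the $\pm 1$-valued $X^{(k)}_n$) at the shifted parameter $q=1+\sqrt{y_n}z$, note that $\Im(q)=\sqrt{y_n}\,\eta$ so that $1+|q|/\Im(q)=O(\sqrt{n/p})$ under the boundedness of $p/n$, and split $\sum_{i\neq j}F_{ij}=\sum_{i,j}F_{ij}-\tr F$ to handle part ii). The only cosmetic difference is that the paper carries the intermediate bounds term-by-term rather than stating the $O(\sqrt{np})$ versus $O(n^{3/2}\sqrt p)$ comparison as you do, but the decomposition, the key lemma, and the final bounds all coincide.
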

\begin{proof}
We will use Lemma~\ref{lem:matrixbounds} throughout the proof. For $i)$ we obtain
\[
\left(\sum_{i\neq j}\abs{F_{ij}(X)}^2\right)^{\frac{1}{2}} \leq n\sqrt{p-1}\left(1+\frac{\abs{q}}{\eta}\sqrt{\frac{n}{p}}\right) \leq C_1 n^{3/2}
\]
for some constant $C_1$ independent of $n$ and $p$. For ii) we note that
\begin{align*}
&\bigabs{\sum_{i\neq j} F_{ij}(X)} \leq 
\bigabs{\sum_{i,j} F_{ij}(X)} + \abs{\tr F(X)}\leq \frac{p}{\Im(q)} + \frac{1}{n}\left(1+\frac{\abs{q}}{\Im(q)}\right) + np\left(1+\frac{\abs{q}}{\Im(q)}\right)\\
&\leq\frac{\sqrt{pn}}{\eta} + \frac{1}{n} + \frac{\abs{q}}{\eta\sqrt{pn}} + np + \frac{\abs{q} n^{3/2}p^{1/2}}{\eta} \leq C_2  n^{3/2}p^{1/2}
\end{align*}
for some constant $C_2$ independent of $n$ and $p$. Now for $iii)$ we calculate
\[
\left(\sum_{j\in\oneto{n}}\bigabs{\sum_{i\in\oneto{n}\backslash\{j\}} F_{ij}(X)}^2\right)^{\frac{1}{2}} \leq \left(n \frac{p}{n\eta^2}\right)^{\frac{1}{2}}\leq C_3 p^{1/2}
\]  
for a constant $C_3$ independent of $n$ and $p$.
This shows the statement with $C_a\defeq\max\{C_1^a,C_2^a,C_3^a\}$.
\end{proof}

Throughout this section the random variable $M_{np}^{\beta}$ satisifies the properties listed in Lemma \ref{lem:curiedefinetti} if $\beta\le 1$ or those in Lemma \ref{lem:Curiehightemp} if $\beta>1$.

Note that $X_n$ is a matrix made up of $np$ Curie-Weiss($\beta$, $np$) spins, and that for any $k\in\oneto{p}$, $\alpha_k$ is the $k$-th row of $X_n$ and thus contains variables disjoint from the variables in $X_n^{(k)}$.
In what follows, we will use that for $r,s,t\geq 0$ and $a\in\N$ we have $(s+t)^a \leq 2^a(s^a+t^a)$ and $(r+s+t)^a \leq 4^a (r^a + s^a + t^a)$.
We calculate for $a\in 2\N$ and $k\in\oneto{p}$ arbitrary (where sums over $i\neq j$ are for $i,j\in \oneto{n}$, and further explanations can be found beneath the calculation):
\begin{align*}
 &\E\abs{S(n,k,q)}^a = \E\E[\abs{S(n,k,q)}^a|M^{\beta}_{np}] = \E \E\left[\sum_{i\neq j}\alpha_k(i)F_{ij}\left(X^{(k)}_n\right)\alpha_k(j)\,\middle|\, M^{\beta}_{np} \right] \\
& =\int_{[-1,1]}\int_{\{\pm 1\}^{(p-1)\times n}} \int_{\{\pm 1\}^n} \bigabs{\sum_{i\neq j} x_i F_{ij}(X) x_j}^a \text{d} \Prob^{\alpha_k|M^{\beta}_{np}=t}(x)\text{d} \Prob^{X^{(k)}_n|M^{\beta}_{np}=t}(X)\text{d}\Prob^{M^{\beta}_{np}}(\de t)\\
& \leq \int_{[-1,1]}\int_{\{\pm 1\}^{(p-1)\times n}}  
4^a (4 A_a^2 \mu^2_a)^a \left(\sum_{i\neq j}\abs{F_{ij}(X)}^2\right)^{\frac{a}{2}}
\text{d} \Prob^{X^{(k)}_n|M^{\beta}_{np}=t}(X)\text{d}\Prob^{M^{\beta}_{np}}(\de t)\\
& \quad + \int_{[-1,1]}\int_{\{\pm 1\}^{(p-1)\times n}}  
4^a (2 A_a \mu_a)^a \abs{t}^a \left(
\sum_{j\in\oneto{n}}\bigabs{\sum_{i\in\oneto{n}\backslash\{j\}} F_{ij}(X)}^2\right)^{\frac{a}{2}}
\text{d} \Prob^{X^{(k)}_n|M^{\beta}_{np}=t}(X)\text{d}\Prob^{M^{\beta}_{np}}(\de t)\\
&\quad + \int_{[-1,1]}\int_{\{\pm 1\}^{(p-1)\times n}}  
4^a\abs{t}^{2a} \bigabs{\sum_{i\neq j} F_{ij}(X)}^a
\text{d} \Prob^{X^{(k)}_n|M^{\beta}_{np}=t}(X)\text{d}\Prob^{M^{\beta}_{np}}(\de t)\\
&\leq K\left( n^{3a/2} + p^{a/2} \int_{[-1,1]}\abs{t}^a\de\Prob^{M^{\beta}_{np}}(\de t) + n^{3a/2}p^{a/2}\int_{[-1,1]}\abs{t}^{2a}\de\Prob^{M^{\beta}_{np}}(\de t)\right)\\ 
&\leq K \left(n^{3a/2} + p^{a/2} (np)^{-a/4} + n^{3a/2}p^{a/2}(np)^{-a/2}\right)\leq K n^{3a/2}. 
\end{align*}
where for the fourth step we used Lemma~\ref{lem:largedev} and the constants $A_a$ and $\mu_a$ therein (note that $F(X)$ is symmetric), in the fifth step we used Corollary~\ref{cor:SecondFbounds} and from here on out, $K$ denotes a constant not depending on $n$ and $p$, but only on $a$,$\beta$ and $\eta$, and $K$ may change its value from one occurrence to the next. In the sixth step we applied Lemma~\ref{lem:curiedefinetti}. Hence, if $\epsilon > 0$ is arbitrary, we calculate
\[
\Prob\left(\frac{\max_{k\in\oneto{p}}\abs{A(n,k,q)}}{\sqrt{y_n}} > \epsilon\right)\\
\leq  p \,\max_{k\in\oneto{p}} \Prob(
S(n,k,q)^a > \epsilon n^{3a/2}p^{a/2})\leq \frac{ pKn^{3a/2}}{\epsilon^a n^{3a/2}p^{a/2}}\,,
\]
which is summable in $p$ for (say) $a=6$.

\subsection{The case $\beta>1$.} To prove part (iii) of Theorem~\ref{thm:CWMP}, let $\beta >1$. Instead of the matrix $X_n$ we consider
\[
\tilde{X}_n \defeq \frac{1}{\sqrt{1-m^2}}\left(X_n(i,j) - m \one_{M^{\beta}_{np}>0}+m\one_{M^{\beta}_{np}<0}\right)_{ij},
\] 
which for every realization is just a rank 1 perturbation of $(1-m^2)^{-1/2}X_n$. As a consequence, it suffices to prove Theorem~\ref{thm:CWMP} (iii) for $ \tilde{V}_n \defeq n^{-1} \tilde{X}_n\tilde{X}_n^T$ instead of  $ (1-m^2)^{-1}V_n$. Using the terminology as above, but substituting $\tilde{X}_n$ for $X_n$ and $\tilde{V}_n$ for $V_n$, we obtain new terms $\tilde{s}_n$, $\tilde{\Omega}^{(k)}_n$, $\tilde{\alpha}_k$, $\tilde{\delta}_n$ and $\tilde{S}(n,k,z)$.  Inspecting above proof for the case $\beta\leq 1$ and observing \eqref{eq:segtgts}, it will suffice to show
\begin{equation*}\label{eq:tildesegtgts}
\lim_{p\to \infty} y_n^{-1/2} \max_{k=1,\ldots,p}\abs{\tilde{\Omega}_k^{(n)}(q)}=  0 \quad \as 
\end{equation*}
Here,
\begin{align*}
\tilde{\Omega}_k^{(n)}(q) &= - \frac{1}{n^2}\tilde{\alpha}_k^T \tilde{X}_n^{(k)T}\left(\frac{1}{n}\tilde{X}_n^{(k)}\tilde{X}_n^{(k)T}-q\right)^{-1}\tilde{X}_n^{(k)}\tilde{\alpha}_k + y_n + y_n q \tilde{s}_n(q) + \frac{1}{n}\tilde{\alpha}_k^T\tilde{\alpha}_k - 1\\
&=  \left(- \frac{1}{n^2} \sum_{i\neq j}^n \tilde{\alpha}_k(i) \left[ \tilde{X}_n^{(k)T}\left(\frac{1}{n}\tilde{X}_n^{(k)}\tilde{X}_n^{(k)T}-q\right)^{-1} \tilde{X}_n^{(k)}\right](i,j) \tilde{\alpha}_k(j) \right) \\
&\quad\,  +\left(- \frac{1}{n^2} \tr \tilde{X}_n^{(k)T}\left(\frac{1}{n}\tilde{X}_n^{(k)}\tilde{X}_n^{(k)T}-q\right)^{-1} \tilde{X}_n^{(k)} + y_n + y_n q \tilde{s}_n(z)\right)\\
&\quad\, + \left(\frac{1}{n}\tilde{\alpha}_k^T\tilde{\alpha}_k - 1\right)\\
&=: \tilde{A}(n,k,q) + \tilde{B}(n,k,q) + \tilde{C}(n,k,q).
\end{align*}
The term $\tilde{B}(n,k,q)$ can be treated analogously to the term $B(n,k,q)$ above, so we obtain
\[
y_n^{-1/2}\max_{k=1,\ldots,p}\abs{\tilde{B}(n,k,q)} \leq \frac{1}{n \sqrt{p/n}} + \frac{\abs{1+z\sqrt{p/n}}}{p\eta} \xrightarrow{p\to\infty} 0 \quad \as
\]

To handle $\tilde{A}(n,k,q)$, we use Lemma~\ref{lem:Curiehightemp} and the definitions therein. $\tilde{X}_n$ is a matrix made up of $np$ perturbed Curie-Weiss($\beta,np$) spins. Now $\tilde{\alpha}_k$ is the $k$-th row of $\tilde{X}_n$ and thus contains variables disjoint from those in $\tilde{X}_n^{(k)}$. Analogously to the case above we consider the term
\[
F(\tilde{X}_n^{(k)}) = \tilde{X}_n^{(k)T}\left(\frac{1}{n}\tilde{X}_n^{(k)}\tilde{X}_n^{(k)T}-q\right)^{-1} \tilde{X}_n^{(k)}.
\]
We use a slightly faster calculation than for the case $\beta\leq 1$, where a finer analysis was necessary due to the slowly decaying correlations when $\beta=1$. In the following, we will directly compare $\tilde{S}(n,k,q)$ to 
\[
\tilde{R}(n,k,q) \defeq \left(\sum_{i\neq j}\abs{F_{ij}(\tilde{X}^{(k)}_n)}^2\right)^{\frac{1}{2}}.
\]
Note that $\tilde{R}(n,k,q)$ never vanishes, so we my divide by it. Now
 for $T>0$ and $a\in 2\N$ (and where sums over $i\neq j$ are for $i,j\in \oneto{n}$) we calculate for $k\in\oneto{p}$:
\begin{align*}
&\Prob\left(
\tilde{S}(n,k,q) > T \tilde{R}(n,k,q)\right)\\
&=\Prob\left(\bigabs{\sum_{i\neq j} \tilde{\alpha}_k(i) F_{ij}(\tilde{X}^{(k)}_n) \tilde{\alpha}_k(j)} > T \left(\sum_{i\neq j}\abs{F_{ij}(\tilde{X}^{(k)}_n)}^2\right)^{\frac{1}{2}}\right)\\
&\leq\frac{1}{T^a} \int_{[-1,1]}\int_{\Zcal_2} \int_{\Zcal_1} \bigabs{\frac{\sum_{i\neq j} \tilde{x}_i F_{ij}(\tilde{X}) \tilde{x}_j}{\left(\sum_{i\neq j}\abs{F_{ij}[\tilde{X}]}^2\right)^{\frac{1}{2}}}}^a \text{d} \Prob^{\tilde{\alpha}_k|M^{\beta}_{np}=t}(\tilde{x})\text{d} \Prob^{\tilde{X}^{(k)}_n|M^{\beta}_{np}=t}(\tilde{X})\text{d}\Prob^{M^{\beta}_{np}}(\de t)\\
& \leq\frac{1}{T^a} \int_{[-1,1]}\int_{\Zcal_2}  
\left[4 A_a^2 \mu^2_a 
+ 2 A_a \mu_a \sqrt{n}\abs{\zeta(t)}   +
n \abs{\zeta(t)}^2 \right]^a
\text{d} \Prob^{\tilde{X}^{(k)}_n|M^{\beta}_{np}=t}(y_K)\text{d}\Prob^{M^{\beta}_{np}}(\de t)\\
& \leq\frac{1}{T^a} \int_{[-1,1]} 
4^a (4 A_a^2 \mu^2_a)^a \text{d}\Prob^{M^{\beta}_{np}}(\de t)
~+~ \frac{1}{T^a} \int_{[-1,1]}
4^a (2 A_a \mu_a)^a n^{a/2}\abs{\zeta(t)}^a
\text{d}\Prob^{M^{\beta}_{np}}(\de t)\\
& \quad +\frac{1}{T^a} \int_{[-1,1]} 
n^a \abs{\zeta(t)}^{2a}
\text{d}\Prob^{M^{\beta}_{np}}(\de t)\\
&\leq \frac{1}{T^a}\left(K + K n^{a/2} \int_{[-1,1]}\abs{\zeta(t)}^a
\text{d}\Prob^{M^{\beta}_{np}}(\de t) + K n^a \int_{[-1,1]}
\abs{\zeta(t)}^{2a}
\text{d}\Prob^{M^{\beta}_{np}}(\de t)\right)\\
&\leq \frac{K}{T^a},
\end{align*}
where $\Zcal_1$ and $\Zcal_2$ denote the ranges of $\tilde{\alpha}_k$ and $\tilde{X}^{(k)}_n$, respectively (cf.\ Lemma~\ref{lem:Curiehightemp}). Further, in the third step we used Lemma~\ref{lem:largedev} and in the last step  Lemma~\ref{lem:Curiehightemp}. Again, $K$ denotes a floating constant which may change its value from one occurrence to the next, but remains independent of $k$, $n$ and $p$. This helps, since now
\begin{align*}
&\frac{K}{T^a} 
\geq\Prob\left(\tilde{S}(n,k,q) > T \tilde{R}(n,k,q)\right) 
\geq\Prob\left(n^2\tilde{A}(n,k,q) > T n\sqrt{p}\left(1+\frac{\abs{q}}{\Im(q)}\right)\right)\\
&
=\Prob\left(\frac{\tilde{A}(n,k,q)}{\sqrt{p/n}} >  \frac{T}{\sqrt{n}}\left(1+\frac{\abs{q}\sqrt{n}}{\eta\sqrt{p}}\right)\right)
=\Prob\left(\frac{\tilde{A}(n,k,q)}{\sqrt{y_n}} >  \frac{T}{\sqrt{n}}+\frac{T\abs{q}}{\eta\sqrt{p}}\right),
\end{align*}
where in the second step we used the bound on $\tilde{R}(n,k,q)$ given by Lemma~\ref{lem:matrixbounds}. Choosing $T=p^{1/4}$, $a\in 2\N$ such that $a>8$ and using the union bound will show by Borel-Cantelli that
\[
y_n^{-1/2}\max_{k=1,\ldots,p}\abs{\tilde{A}(n,k,q)} \xrightarrow[p\to\infty]{} 0 \quad \text{almost surely.}
\]
It is left to analyze $\tilde{C}(n,k,q)$. Note that this term vanished in the case $\beta\leq 1$. Note also that it suffices to show
\[
\frac{1}{\sqrt{y_n}}\max_{k\in\oneto{p}}\bigabs{\sum_{l=1}^n (\tilde{X}(k,l)^2-1)} = \frac{1}{\sqrt{y_n}}\max_{k\in\oneto{p}} \abs{\tilde{C}(n,k,q)} \xrightarrow[p\to\infty]{} 0 \qquad \text{a.s.}
\]
To this end, for $T>0$, $k\in\oneto{p}$ and $a\in 2\N$ arbitrary it holds (with explanations below)
\begin{align*}
&\Prob\left(\bigabs{\frac{1}{n}\sum_{l=1}^n (\tilde{X}_n(k,l)^2-1)} > T \right)\\
&\leq \frac{1}{(Tn)^a} \E \bigabs{\sum_{l=1}^n (\tilde{X}_n(k,l)^2-1)}^a\\
&= \frac{1}{(Tn)^a} \int_{[-1,1]}\int_{\Zcal_1}\bigabs{\sum_{l=1}^n (x_{kl}^2 -1)}^a \de \Prob^{\tilde{\alpha}_k|M_{np}^{\beta}=t}(x)\de\Prob^{M_{np}^{\beta}}(\de t)\\
&\leq \frac{1}{(Tn)^a} \int_{[-1,1]} \left[(A_a\mu_a + \sqrt{n}\psi(t))\sqrt{n}\right]^a \de\Prob^{M^{\beta}_{np}}(\de t)\\
&= \frac{1}{(Tn)^a} \int_{[-1,1]} \left[A_a\mu_a\sqrt{n} + n\psi(t)\right]^a \de\Prob^{M^{\beta}_{np}}(\de t)\\
& \leq\frac{1}{(Tn)^a}\left(2^a(A_a\mu_a)^a n^{a/2} + 2^a n^a \int_{[-1,1]}\psi(t)^a \de\Prob^{M^{\beta}}_{np}(\de t) \right)\\
&\leq \frac{1}{(Tn)^a} K n^{a/2} + \frac{1}{(Tn)^a}K \frac{n^a}{n^{a/2}}
\leq \frac{K}{T^an^{a/2}}.
\end{align*}
where in the first step we used Markov's inequality, in the second step conditional expectations, in the third step Lemma~\ref{lem:largedev} i) with $b_i=1$, and in the last step Lemma~\ref{lem:Curiehightemp}.

Choosing $\epsilon>0$ arbitrarily and setting $T\defeq \epsilon\sqrt{p/n}$ we obtain for $a\in\N$ with $a\in 2\N$ arbitrary that
\[
\Prob\left(\frac{1}{\sqrt{y_n}}\max_{k\in\oneto{p}} \abs{\tilde{C}(n,k)} > \epsilon \right) \leq 
\max_{k\in\oneto{p}} \Prob\left(\abs{\tilde{C}(n,k)} > \epsilon\sqrt{y_n} \right)\leq 
 \frac{pK}{\epsilon^a p^{a/2}},
\]
which is summable over $p$ for $a> 4$. This ends the proof via Borel-Cantelli.

\sloppy
\printbibliography
\vspace{1cm}

\noindent\textsf{(Michael Fleermann)\newline
FernUniversit\"at in Hagen\newline
Fakult\"at f\"ur Mathematik und Informatik\newline 
Universit\"atsstra\ss e 1\newline 
58084 Hagen}\newline
\textit{E-mail address:}
\texttt{michael.fleermann@fernuni-hagen.de}
\vspace{1cm}

\noindent\textsf{(Johannes Heiny)\newline
Ruhr-Universit\"at Bochum\newline
Fakult\"at f\"ur Mathematik\newline 
Universit\"atsstra\ss e 150\newline 
44801 Bochum}\newline
\textit{E-mail address:}
\texttt{johannes.heiny@rub.de}

\end{document}